\theoremstyle{thmstyleone}%
\newtheorem{theorem}{Theorem}
\newtheorem{corollary}{Corollary}
\newtheorem{lemma}{Lemma}
\newtheorem{proposition}{Proposition}%
\theoremstyle{thmstyletwo}%
\newtheorem{example}{Example}%
\newtheorem{remark}{Remark}%
\newtheorem{property}{Property}
\theoremstyle{thmstylethree}%
\newtheorem{definition}{Definition}%
\begin{document}
	
	\title[Frames and Operators on quaternionic Hilbert spaces]{Frames and Operators on Quaternionic Hilbert Spaces}
	
	
	\author{\fnm{Najib} \sur{Khachiaa}}\email{khachiaa.najib@uit.ac.ma}
	\affil{\orgdiv{Laboratory Partial Differential Equations, Spectral Algebra and Geometry, Department of Mathematics}, \orgname{Faculty of Sciences, University Ibn Tofail}, \orgaddress{\city{Kenitra}, \country{Morocco}}}
	
	\abstract{The aim of this work is to study frame theory in quaternionic Hilbert spaces. We provide a characterization of frames in these spaces through the associated operators. Additionally, we examine frames of the form $\{Lu_i\}_{i \in I}$, where $L$ is a right $\mathbb{H}$-linear bounded operator and $\{u_i\}_{i \in I}$ is a frame.}

	\keywords{Frame theory, Operator theory, quaternionic Hilbert spaces}
	
	\pacs[MSC Classification]{42C15; 42C40; 42A38.}
	
	\maketitle

\section{Introduction and preliminaries}
Frames in quaternionic Hilbert spaces provide a robust framework for analyzing and reconstructing signals within a higher-dimensional space. As an extension of the classical frame theory, these structures facilitate efficient data representation and processing. The unique properties of quaternionic spaces offer new avenues for exploration, particularly in applications such as signal processing and communications. This work aims to investigate the theoretical foundations of frames in quaternionic Hilbert spaces and their practical implications. The quaternionic field is an extension of the real and complex number systems, consisting of numbers known as quaternions. Quaternions are used to represent three-dimensional rotations and orientations, making them invaluable in computer graphics and robotics. Unlike real and complex numbers, quaternion multiplication is non-commutative, which adds complexity to their algebraic structure. Quaternions also provide a more efficient way to perform calculations in three-dimensional space, enhancing applications in physics and engineering.

\begin{definition}[The field of quaternions]
The non-commutative field of quaternions $\mathbb{H}$ is a four-dimensional real algebra with unity. In $\mathbb{H}$, $0$ denotes the null element  and $1$ denotes the identity with respect to multiplication. It also includes three so-called imaginary units, denoted by $i,j,k$. i.e.,
$$\mathbb{H}=\{a_0+a_1i+a_2j+a_3k:\; a_0,a_1,a_2,a_3\in \mathbb{R}\},$$
where $i^2=j^2=k^2=-1$, $ij=-ji=k$, $jk=-kj=i$ and $ki=-ik=j$. For each quaternion $q=a_0+a_1i+a_2j+a_3k$, we deifine the conjugate of $q$ denoted by $\overline{q}=a_0-a_1i-a_2j-a_2k \in \mathbb{H}$ and the module of $q$ denoted by $\vert q\vert $ as 
$$\vert q\vert =(\overline{q}q)^{\frac{1}{2}}=(q\overline{q})^{\frac{1}{2}}=\displaystyle{\sqrt{a_0^2+a_1^2+a_2^2+a_3^2}}.$$
For every $q\in \mathbb{H}$, $q^{-1}=\displaystyle{\frac{\overline{q}}{\vert q\vert ^2}}.$
\end{definition}

\begin{definition}[Right quaternionic vector space]
A right quaternioniq vector space $V$ is a linear vector space under right scalar multiplication over the field of quaternions $\mathbb{H}$, i.e., the right scalar multiplication 
$$\begin{array}{rcl}
V\times \mathbb{H} &\rightarrow& V\\
(v,q)&\mapsto& v.q,
\end{array}$$
satisfies the following for all $u,v\in V$ and $q,p\in \mathbb{H}$:
\begin{enumerate}
\item $(v+u).q=v.q+u.q$,
\item $v.(p+q)=v.p+v.q$,
\item $v.(pq)=(v.p).q$.
\end{enumerate}
Instead of $v.q$, we often use the notation $vq$.
\end{definition}

\begin{definition}[Right quaterninoic pre-Hilbert space]
A right quaternionic pre-Hilbert space $\mathcal{H}$, is a right quaternionic vector space equipped with the binary mapping\\ \( \langle \cdot \mid \cdot \rangle : \mathcal{H} \times \mathcal{H} \to \mathbb{H} \) (called the Hermitian quaternionic inner product) which satisfies the following properties:

\begin{itemize}
    \item[(a)] \( \langle v_1 \mid v_2 \rangle = \overline{\langle v_2 \mid v_1 \rangle} \) for all \( v_1, v_2 \in \mathcal{H}\),
    \item[(b)] \( \langle v \mid v \rangle > 0 \) if \( v \neq 0 \),
    \item[(c)] \( \langle v \mid v_1 + v_2 \rangle = \langle v \mid v_1 \rangle + \langle v \mid v_2 \rangle \) for all \( v, v_1, v_2 \in \mathcal{H} \),
    \item[(d)] \( \langle v \mid uq \rangle = \langle v \mid u \rangle q \) for all \( v, u \in \mathcal{H} \) and \( q \in \mathbb{H} \).
\end{itemize}
\end{definition}

In view of Definition  $3$ , a right pre-Hilbert space $\mathcal{H}$ also has the property:

\begin{itemize}
    \item[(i)] $ \langle vq \mid u \rangle = \overline{q} \langle v \mid u \rangle$ for all $v, u \in \mathcal{H} $ and \( q \in \mathbb{H} \).
\end{itemize}

Let \( \mathcal{H} \) be a right quaternionic pre-Hilbert space with the Hermitian inner product \( \langle \cdot \mid \cdot \rangle \). Define the quaternionic norm \( \|\cdot\| : \mathcal{H} \to \mathbb{R}^+ \) on \( \mathcal{H} \) by
\[
\|u\| = \sqrt{\langle u \mid u \rangle}, \quad u \in \mathcal{H}, 
\]
which satisfies the following properties:
\begin{enumerate}
\item $\|uq\|=\|u\|\vert q\vert$, for all $u\in \mathcal{H}$ and $q\in \mathbb{H}$,
\item $\| u+v\|\leq \|u\|+\|v\|$,
\item $\|u\|=0\Longleftrightarrow u=0$ for $u\in \mathcal{H}$.
\end{enumerate}
\begin{definition}[ Right quaternionic Hilbert space]
A right quaternionic pre-Hilbert space is called a right quaternionic Hilbert space if it is complete with respect to the quaternionic norm.
\end{definition}

\begin{example}
Define $$\ell^2(\mathbb{H}):=\left\{ \{q_i\}_{i\in I}\subset \mathbb{H}:\; \displaystyle{\sum_{i\in I}\vert q_i\vert^2<\infty} \right\}.$$
$\ell^2(\mathbb{H})$ under right multiplication by quaternionic scalars together with the quaternionic inner product defined as: $\langle p\mid q\rangle:=\displaystyle{\sum_{i\in I} \overline{p_i}q_i}$ for  $p=\{p_i\}_{i\in I}$ and $q=\{q_i\}_{i\in I}\in \ell^2(\mathbb{H})$, is a right quaternionic Hilbert space.
\end{example}
\begin{theorem}[The Cauchy-Schwarz inequality]\cite{10}
If $\mathcal{H}$ is a right quaternionic Hilbert space, then for all $u,v\in \mathcal{H}$, 
$$\vert \langle u\mid v\rangle \vert\leq \|u\| \|v\|.$$
\end{theorem}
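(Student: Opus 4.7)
The plan is to adapt the classical Hilbert-space argument, taking care with the non-commutativity of $\mathbb{H}$ and with the asymmetric roles of the two slots of $\langle \cdot \mid \cdot \rangle$. The case $v=0$ is trivial (both sides vanish by positivity of the norm and axiom (b)), so assume $v \neq 0$. I would introduce the auxiliary vector $u - vq$ for a quaternionic scalar $q$ to be determined later, and exploit positivity in the form $\langle u - vq \mid u - vq \rangle \geq 0$.

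Expanding using additivity (c) in each slot, the right-linearity axiom (d) (giving $\langle v \mid uq \rangle = \langle v \mid u\rangle q$), its conjugate partner (i) (giving $\langle vq \mid u \rangle = \overline{q}\langle v \mid u\rangle$), and the conjugate-symmetry axiom (a), yields
\[
0 \leq \|u\|^2 - \langle u \mid v \rangle q - \overline{q}\,\overline{\langle u \mid v \rangle} + \overline{q}\,\|v\|^2\, q.
\]
Because $\|v\|^2 \in \mathbb{R}$ is central in $\mathbb{H}$, the last term simplifies to $|q|^2\|v\|^2$. I would then set $q = \overline{\langle u \mid v \rangle}/\|v\|^2$, so that $\langle u \mid v \rangle q$ and $\overline{q}\,\overline{\langle u \mid v \rangle}$ both equal $|\langle u \mid v \rangle|^2/\|v\|^2$, and $|q|^2\|v\|^2$ equals the same quantity. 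After cancellation one obtains $\|u\|^2 \geq |\langle u \mid v \rangle|^2/\|v\|^2$, and taking square roots delivers the stated inequality.

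The only real obstacle is bookkeeping around the order of multiplication: unlike in the complex case one cannot freely commute scalars past inner-product values or past each other. Concretely, I must verify that scalars introduced on the right of $v$ migrate out of the first slot as their conjugates on the \emph{left} of the inner product, and I must rely on the centrality of real numbers when merging $\overline{q}\|v\|^2 q$ into $|q|^2\|v\|^2$. Once these ordering issues are tracked, the argument mirrors the complex Hilbert space proof exactly, and no further ingredients beyond axioms (a)--(d), property (i), and the definition of the norm are needed.
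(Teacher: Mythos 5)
Your proof is correct. The paper itself gives no argument for this statement --- it is quoted from reference [10] without proof --- so there is nothing to compare against; your completion supplies the standard argument, correctly adapted to the quaternionic setting. The two points that actually need care are handled properly: the quadratic term $\overline{q}\,\langle v\mid v\rangle\, q$ collapses to $|q|^2\|v\|^2$ because $\|v\|^2$ is real and hence central, and with $q=\overline{\langle u\mid v\rangle}/\|v\|^2$ each of the three non-leading terms evaluates to $|\langle u\mid v\rangle|^2/\|v\|^2$ (using $c\overline{c}=\overline{c}c=|c|^2$), so the order of multiplication never causes trouble. One could add the one-line remark that $\langle w\mid w\rangle$ is real and nonnegative for every $w$ (real by axiom (a), nonnegative by (b) together with $\langle 0\mid 0\rangle=0$), which is what licenses the starting inequality, but this is a triviality rather than a gap.
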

\begin{definition}[orthogonality]
Let \(\mathcal{H} \) be a right quaternionic Hilbert space and \( A\) be a subset of \( \mathcal{H} \). Then, define the set:

\begin{itemize}
    \item \( A^{\perp} = \{ v \in \mathcal{H} : \langle v \mid u \rangle = 0 \; \forall \; u \in A \} \);
    \item \( \langle A \rangle \) as the right quaternionic vector subspace of \( \mathcal{H} \) consisting of all finite right \( \mathbb{H} \)-linear combinations of elements of \( A\).
\end{itemize}
\end{definition}
\begin{property}\cite{10}
Let \(\mathcal{H} \) be a right quaternionic Hilbert space and \( A\) be a subset of \( \mathcal{H} \). Then,
\begin{enumerate}
\item $A^{\perp}=\langle A\rangle^\perp=\overline{\langle A\rangle }^\perp=\overline{\langle A\rangle ^\perp}.$
\item $(A^\perp)^\perp=\overline{\langle A\rangle}.$
\item $\overline{A}\oplus A^\perp=\mathcal{H}.$
\end{enumerate}
\end{property}

\begin{theorem}\cite{10}
Let \( \mathcal{H} \) be a quaternionic Hilbert space and let \( N \) be a subset of \( \mathcal{H} \) such that, for \( z, z' \in N \), we have \( \langle z \mid z' \rangle = 0 \) if \( z \neq z' \) and \( \langle z \mid z \rangle = 1 \). Then, the following conditions are equivalent:

\begin{itemize}
    \item[(a)] For every \( u, v \in \mathcal{H} \), the series \( \sum_{z \in N} \langle u \mid z \rangle \langle z \mid v \rangle \) converges absolutely and
    \[
    \langle u \mid v \rangle = \sum_{z \in N} \langle u \mid z \rangle \langle z \mid v \rangle;
    \]
    
    \item[(b)] For every \( u \in \mathcal{H} \), \( \|u\|^2 = \displaystyle{\sum_{z \in N} |\langle z \mid u \rangle|^2 }\);
    
    \item[(c)] \( N^{\perp} = \{0\} \);
    
    \item[(d)] \( \langle N \rangle \) is dense in \( \mathcal{H} \).
\end{itemize}
\end{theorem}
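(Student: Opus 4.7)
My plan is to prove the cycle (a) $\Rightarrow$ (b) $\Rightarrow$ (c) $\Rightarrow$ (d) $\Rightarrow$ (a). The first three implications are essentially formal, while (d) $\Rightarrow$ (a) carries the real content: it amounts to establishing a Fourier-type expansion $u=\sum_{z\in N} z\langle z\mid u\rangle$, which must be handled carefully because scalars sit on the right in our convention.

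For (a) $\Rightarrow$ (b) I would simply set $v=u$ and observe that $\langle u\mid z\rangle\langle z\mid u\rangle=|\langle z\mid u\rangle|^{2}$ using property (a) of the inner product. For (b) $\Rightarrow$ (c), if $u\in N^{\perp}$ then each term on the right of (b) is zero, so $\|u\|=0$ and $u=0$. For (c) $\Rightarrow$ (d) I would invoke the direct sum decomposition $\overline{\langle N\rangle}\oplus N^{\perp}=\mathcal{H}$ from Property~1(3): since $N^{\perp}=\{0\}$, density of $\langle N\rangle$ follows.

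The main work is (d) $\Rightarrow$ (a). First I would establish Bessel's inequality. For any finite $F\subset N$ set $u_{F}:=\sum_{z\in F}z\langle z\mid u\rangle$; using orthonormality of $N$ and right-linearity I would compute $\|u_{F}\|^{2}=\sum_{z\in F}|\langle z\mid u\rangle|^{2}$ and check the orthogonality $\langle u_{F}\mid u-u_{F}\rangle=0$ (this is where the non-commutativity is most delicate: the factor $\overline{\langle z\mid u\rangle}$ that comes out on the left must cancel correctly with $\langle z\mid u-u_{F}\rangle$). Pythagoras then gives
\[
\sum_{z\in F}|\langle z\mid u\rangle|^{2}=\|u_{F}\|^{2}\le\|u\|^{2},
\]
so the set $N_{u}:=\{z\in N:\langle z\mid u\rangle\ne 0\}$ is countable, enumerated as $(z_{n})$, and $\sum_{n}|\langle z_{n}\mid u\rangle|^{2}\le\|u\|^{2}$.

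Next I would show the partial sums of $\sum_{n}z_{n}\langle z_{n}\mid u\rangle$ are Cauchy (again by Pythagoras applied to a tail block), so they converge to some $w\in\mathcal{H}$. By continuity of the inner product, $\langle z\mid w\rangle=\langle z\mid u\rangle$ for every $z\in N$, hence $u-w\in N^{\perp}$. Density from (d) forces $N^{\perp}=\{0\}$ (if $v\in N^{\perp}$ then $v\perp\langle N\rangle$ and so $v\perp\mathcal{H}$), and therefore $u=w=\sum_{n}z_{n}\langle z_{n}\mid u\rangle$. Taking the inner product with $v$ and pulling the sum out by continuity yields
\[
\langle u\mid v\rangle=\sum_{n}\overline{\langle z_{n}\mid u\rangle}\langle z_{n}\mid v\rangle=\sum_{z\in N}\langle u\mid z\rangle\langle z\mid v\rangle.
\]
Absolute convergence finally follows from Cauchy--Schwarz in $\ell^{2}(\mathbb{R})$ combined with Bessel's inequality applied to both $u$ and $v$, giving $\sum_{z\in N}|\langle u\mid z\rangle\langle z\mid v\rangle|\le\|u\|\|v\|$. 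The only real obstacle throughout is keeping the side of scalars straight when manipulating the quaternionic Fourier coefficients; once the expansion $u=\sum_{z}z\langle z\mid u\rangle$ is justified, the remaining steps mirror the classical complex case.
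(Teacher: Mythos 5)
Your proposed argument is correct: the implication cycle is complete, the quaternionic bookkeeping (scalars on the right, $\langle u\mid z\rangle=\overline{\langle z\mid u\rangle}$, the Pythagoras/Bessel step, and the final Cauchy--Schwarz estimate for absolute convergence) is handled consistently with the paper's conventions, and the appeals to Property~1 for (c)$\Leftrightarrow$(d) are legitimate. Note, however, that the paper does not prove this theorem at all --- it is quoted from reference [10] (Ghiloni--Moretti--Perotti) as a known result --- so there is no in-paper proof to compare against; your proposal is simply the standard argument transplanted to the right quaternionic setting, and it is sound.
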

\begin{definition}
A subset $N$ of $\mathcal{H}$ that satisfies one of the statements in Theorem $2$ is called Hilbert basis or orthonormal basis for $\mathcal{H}$.
\end{definition}
\begin{theorem}\cite{10}
Every quaternionic Hilbert space has a Hilbert basis.\\
\end{theorem}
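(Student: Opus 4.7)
The plan is to prove existence of a Hilbert basis by a standard Zorn's lemma argument, using the criterion $N^{\perp}=\{0\}$ supplied by Theorem 2(c). Let $\mathcal{F}$ denote the family of all orthonormal subsets of $\mathcal{H}$, i.e.\ subsets $N\subset \mathcal{H}$ with $\langle z\mid z'\rangle = 0$ for $z\neq z'$ in $N$ and $\langle z\mid z\rangle=1$ for each $z\in N$. We partially order $\mathcal{F}$ by inclusion. To even know $\mathcal{F}$ is non-empty we pick any $u\neq 0$ in $\mathcal{H}$ (the zero space is trivial) and observe that $\{u\|u\|^{-1}\}\in\mathcal{F}$; here we use property (i), namely $\langle uq\mid uq\rangle = \bar q\langle u\mid u\rangle q = |q|^2\|u\|^2$, to see that the rescaled vector has norm one.

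Next I would verify the chain hypothesis of Zorn's lemma. Given a totally ordered subfamily $\mathcal{C}\subset \mathcal{F}$, the union $M:=\bigcup_{N\in\mathcal{C}} N$ is again orthonormal: any two elements $z,z'\in M$ lie in some common $N\in\mathcal{C}$ by total ordering, so their inner product satisfies the orthonormality conditions inherited from $N$. Thus $M\in\mathcal{F}$ is an upper bound for $\mathcal{C}$. By Zorn's lemma, $\mathcal{F}$ contains a maximal element $N_0$.

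I would then prove that $N_0^{\perp}=\{0\}$ by contradiction. Suppose $w\in N_0^{\perp}$ with $w\neq 0$. Set $w_0:=w\|w\|^{-1}$, which is a unit vector in $N_0^\perp$ by the same rescaling computation as above. For every $z\in N_0$, using property (d) of the inner product,
\[
\langle z\mid w_0\rangle = \langle z\mid w\rangle\,\|w\|^{-1}=0,
\]
and by conjugate symmetry $\langle w_0\mid z\rangle = 0$ as well. Hence $N_0\cup\{w_0\}$ is an orthonormal set strictly larger than $N_0$, contradicting maximality. Therefore $N_0^\perp=\{0\}$, and Theorem 2 (equivalence (c)$\Leftrightarrow$(d)) lets us conclude that $N_0$ is a Hilbert basis of $\mathcal{H}$.

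The proof is essentially mechanical; no step is really a serious obstacle. The only point one must be careful about is that the non-commutativity of $\mathbb{H}$ interacts correctly with normalization and orthogonality — this is exactly what properties (d) and (i) of the inner product are for, so the quaternionic setting does not introduce any new difficulty beyond the classical complex case.
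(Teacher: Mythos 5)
The paper states this theorem only as a citation to \cite{10} and gives no proof of its own, so there is no internal argument to compare against; your proof stands on its own. Your Zorn's lemma argument is the standard one and is correct: the normalization step is handled properly via $\langle uq\mid uq\rangle=\bar q\langle u\mid u\rangle q=|q|^2\|u\|^2$ (valid because $\|u\|^2$ is real and hence central in $\mathbb{H}$), the union of a chain of orthonormal sets is orthonormal, and maximality forces $N_0^\perp=\{0\}$, which is exactly criterion (c) of Theorem~2. The only point worth making explicit is that $w_0\notin N_0$ (since $w_0\in N_0^\perp$ has norm $1$, membership in $N_0$ would force $\langle w_0\mid w_0\rangle=0$), so the enlarged set is genuinely strictly larger; this is implicit in your write-up and easily supplied.
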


\begin{definition}[Frames]\cite{11}
Let $\{u_i\}_{i\in I}$ be a sequence in a right quaternionic Hilbert space. $\{u_i\}_{i\in I}$ is said to be Frame for $\mathcal{H}$ if there exist $0<A\leq B<\infty$ such that for all $u\in \mathcal{H}$, the following inequality holds:
$$A\|u\|^2\leq \displaystyle{\sum_{i\in I}\vert \langle u_i,u\rangle \vert^2}\leq B\|u\|^2.$$
\begin{enumerate}
\item If only the upper inequality holds, $\{u_i\}_{i\in I}$ is called a Bessel sequence for $\mathcal{H}$.
\item If $A=B=1$, $\{u_i\}_{i\in I}$ is called a Parseval frame for $\mathcal{H}$.
\end{enumerate}
\end{definition}

\section{Auxiliary results}
In this section, we will present some interesting results on operator theory in quaternionic Hilbert spaces, which will be utilized in our study. The properties of the associated operators of a frame will  also be provided.\\

\begin{definition}[Right $\mathbb{H}$-linear operator]\cite{10}
Let $\mathcal{H}$ and $\mathcal{K}$ be two right quaternionic Hilbert spaces. Let  $L:\mathcal{H}\rightarrow \mathcal{K}$ be a map.
\begin{enumerate}
\item  $L$ is said to be right $\mathbb{H}$-linear operator if $L(uq+vp)=L(u)q+L(v)p$ for all $u,v\in \mathcal{H}$ and $p,q\in \mathcal{H}$.
\item  If $L$ is  a right $\mathbb{H}$-linear operator. $L$ is continuous if and only if $L$ is bounded; i.e., there exists $M> 0$ such that for all $u\in \mathcal{H}$, 
$$\| Lu\|\leq M\| u\|.$$
We denote $\mathbb{B}(\mathcal{H},\mathcal{K})$ the set of all right $\mathbb{H}$-linear bounded operators from $\mathcal{H}$ to $\mathcal{K}$, and if $\mathcal{H}=\mathcal{K}$, we denote $\mathbb{B}(\mathcal{H})$ instead of $\mathbb{B}(\mathcal{H},\mathcal{H})$.
\item If $L$ is a right $\mathbb{H}$-linear  bounded operator, we define the norm of $L$ as: 
$$\|L\|=\displaystyle{\sup_{\|u\|=1}\|Lu\|}=\inf\{M>0:\; \| Lu\|\leq M\| u\|,\; \forall u\in \mathcal{H}\}.$$
And we have for all $L,M\in \mathbb{B}(\mathcal{H}), \|L+M\|\leq\|L\|+\|M\|$ and $\|MN\|\leq\|L\|\|M\|.$\\
\end{enumerate}
\end{definition}
\begin{theorem}[Quaternionic representation Riesz' theorem]\cite{10}
    If $\mathcal{H}$ is a right  quaternionic Hilbert space, the map
    \[
          v\in \mathcal{H} \mapsto \langle v \mid \cdot \rangle \in \mathcal{H}'
    \]
    is well-posed and defines a conjugate-$\mathbb{H}$-linear isomorphism.\\
\end{theorem}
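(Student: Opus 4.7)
My plan is to verify in order: well-posedness of the map, its conjugate-$\mathbb{H}$-linearity, injectivity, and finally surjectivity, which is the main content.

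For well-posedness, given $v\in\mathcal{H}$, property (c) of the inner product together with (d) shows that $u\mapsto\langle v\mid u\rangle$ is right $\mathbb{H}$-linear, while the Cauchy--Schwarz inequality (Theorem~1) gives $|\langle v\mid u\rangle|\le\|v\|\|u\|$, so this functional belongs to $\mathcal{H}'$ with operator norm at most $\|v\|$. Taking $u=v$ shows the norm equals $\|v\|$ exactly, so the map $\Phi\colon v\mapsto\langle v\mid\cdot\rangle$ is an isometry, which immediately yields injectivity (and continuity). Conjugate-$\mathbb{H}$-linearity is a direct consequence of property (i) noted after Definition~3: $\Phi(vq+wp)(u)=\overline{q}\langle v\mid u\rangle+\overline{p}\langle w\mid u\rangle=\bigl(\overline{q}\,\Phi(v)+\overline{p}\,\Phi(w)\bigr)(u)$.

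The substantive step is surjectivity. Fix a nonzero $\phi\in\mathcal{H}'$. Since $\phi$ is continuous, $\ker\phi$ is a closed right $\mathbb{H}$-linear subspace strictly contained in $\mathcal{H}$. By Property~1(3) (orthogonal decomposition), $(\ker\phi)^{\perp}\neq\{0\}$, so I may choose a unit vector $w\in(\ker\phi)^{\perp}$. Note $\phi(w)\neq 0$, for otherwise $w$ would lie in $\ker\phi\cap(\ker\phi)^{\perp}$, forcing $w=0$. Given any $u\in\mathcal{H}$, the vector $u-w\,\phi(w)^{-1}\phi(u)$ lies in $\ker\phi$ by direct computation using right $\mathbb{H}$-linearity of $\phi$, so it is orthogonal to $w$. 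Taking the inner product $\langle w\mid\cdot\rangle$ and using $\langle w\mid w\rangle=1$ together with property (d), I obtain
\[
\langle w\mid u\rangle=\phi(w)^{-1}\phi(u),\qquad\text{hence}\qquad \phi(u)=\phi(w)\langle w\mid u\rangle.
\]
Applying property (i) with $q=\overline{\phi(w)}$, the right-hand side equals $\langle w\,\overline{\phi(w)}\mid u\rangle$, so the vector $v=w\,\overline{\phi(w)}$ represents $\phi$.

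The only genuine obstacle is the choice of a unit vector in $(\ker\phi)^{\perp}$; everything else is formal once property (i) is used to absorb the scalar $\phi(w)$ on the correct side of the inner product. The non-commutativity is handled cleanly by the fact that conjugation appears on the left argument, which is exactly what the conjugate-$\mathbb{H}$-linearity of $\Phi$ demands. Thus $\Phi$ is a bijective conjugate-$\mathbb{H}$-linear isometry, as claimed.
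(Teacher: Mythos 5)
The paper itself offers no proof of this theorem: it is quoted verbatim from the reference \cite{10} and used as a black box, so there is nothing internal to compare your argument against. Judged on its own terms, your proof is correct and complete. The well-posedness and isometry claims follow exactly as you say from properties (c), (d) and the Cauchy--Schwarz inequality, and your conjugate-linearity computation uses property (i) in the right way. The surjectivity argument is the classical kernel-decomposition proof of Riesz's theorem, and you have adapted it correctly to the right-module conventions: $\ker\phi$ is a closed right $\mathbb{H}$-submodule because $\phi(uq)=\phi(u)q$, Property~1(3) gives a nonzero unit vector $w$ in $(\ker\phi)^{\perp}$, and the identity $\phi(u)=\phi(w)\langle w\mid u\rangle=\langle w\,\overline{\phi(w)}\mid u\rangle$ places the scalar on the side where conjugation occurs, which is precisely where non-commutativity could have derailed a careless computation. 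For comparison, the proof in the cited source proceeds instead by fixing a Hilbert basis $N$ and exhibiting the representing vector as $v=\sum_{z\in N}z\,\overline{\phi(z)}$, which requires a separate convergence estimate; your route avoids that by using only the orthogonal decomposition, and is arguably the more elementary of the two.
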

\begin{theorem}[ The uniform boundedness principle]\cite{10}
Let $\mathcal{H}$ be a right quaternionic Hilbert space and $F$ be any subset $F$ of $\mathbb{B}(\mathcal{H})$, if
\[
    \sup_{L \in F} \|L v\| < +\infty \quad \text{for every } v \in \mathcal{H},
\]
then:
\[
    \sup_{L \in F} \|L\| < +\infty.
\]
\end{theorem}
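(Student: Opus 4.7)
The plan is to adapt the classical Banach--Steinhaus argument, based on the Baire Category Theorem, to the quaternionic setting. The key observation is that $\mathcal{H}$ is a complete metric space under the norm induced by $\langle\cdot\mid\cdot\rangle$, so Baire's theorem applies without change. Moreover, for real scalars $\lambda\in\mathbb{R}\subset\mathbb{H}$, right $\mathbb{H}$-linearity gives $L(v\lambda)=(Lv)\lambda$ and the homogeneity $\|v\lambda\|=|\lambda|\,\|v\|$ holds, so every scaling step of the classical proof survives untouched.

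First I would introduce the sublevel sets
$$E_n=\{v\in\mathcal{H}:\|Lv\|\leq n\ \text{for every}\ L\in F\}=\bigcap_{L\in F}\{v\in\mathcal{H}:\|Lv\|\leq n\}.$$
Since each $L\in F$ is bounded, hence continuous, each of the intersected sets is closed as the preimage of $[0,n]$ under the continuous map $v\mapsto\|Lv\|$; thus each $E_n$ is closed. The pointwise boundedness hypothesis translates precisely to $\mathcal{H}=\bigcup_{n\geq 1}E_n$.

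Next I would apply the Baire Category Theorem to obtain an index $n_0$ such that $E_{n_0}$ has nonempty interior, say $B(v_0,r)\subset E_{n_0}$ for some $v_0\in\mathcal{H}$ and $r>0$. For an arbitrary unit vector $v\in\mathcal{H}$, the point $v_0+v\cdot(r/2)$ belongs to $B(v_0,r)$, so right $\mathbb{H}$-linearity with the real scalar $r/2$ yields
$$L(v)\cdot\tfrac{r}{2}=L\!\left(v_0+v\cdot\tfrac{r}{2}\right)-L(v_0).$$
Taking norms and using the triangle inequality together with membership of both $v_0$ and $v_0+v\cdot(r/2)$ in $E_{n_0}$ gives $(r/2)\|Lv\|\leq 2n_0$, hence $\|L\|\leq 4n_0/r$ uniformly for $L\in F$.

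I do not anticipate any serious obstacle here: the argument transfers verbatim from the real or complex case. The only points to check carefully are that the scalar used in the ``interior trick'' is real (so commutation issues in $\mathbb{H}$ never arise and right-scalar multiplication behaves exactly as in the classical setting), and that $\mathcal{H}$ is genuinely complete as a metric space, which is built into the definition of a right quaternionic Hilbert space. Continuity of $v\mapsto\|Lv\|$, used to establish closedness of $E_n$, is immediate from the definition of boundedness given earlier.
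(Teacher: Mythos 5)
Your argument is correct: the Baire category proof of Banach--Steinhaus transfers to the quaternionic setting exactly as you describe, since completeness of $\mathcal{H}$ and continuity of $v\mapsto\|Lv\|$ are all that is needed for the sets $E_n$ to be closed with union $\mathcal{H}$, and the scaling step uses only the real scalar $r/2$, for which right scalar multiplication and the homogeneity $\|vq\|=\|v\|\,|q|$ behave classically. Note that the paper itself gives no proof of this theorem --- it is quoted from the reference of Ghiloni, Moretti and Perotti --- so there is nothing to compare against beyond observing that yours is the standard argument, correctly carried out.
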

\begin{theorem}[ The open map theorem]\label{thm6}\cite{10}
Let $\mathcal{H}$ be a right quaternionic Hilbert space. If \( L \in \mathbb{B}(\mathcal{H}) \) is surjective, then \( L \) is open. In particular, if \( L \) is bijective, then \( L^{-1} \in \mathbb{B}(\mathcal{H}). \)\\
\end{theorem}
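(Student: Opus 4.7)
The plan is to mimic the classical Banach-space argument, which rests on Baire's category theorem and on closure/symmetry/convexity properties that survive intact in the quaternionic setting because $\mathcal{H}$ is, in particular, a real Banach space and every $L\in\mathbb{B}(\mathcal{H})$ is $\mathbb{R}$-linear. Write $B_r=\{u\in\mathcal{H}:\|u\|<r\}$.

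\textbf{Step 1 (Baire step).} Since $L$ is surjective, $\mathcal{H}=\bigcup_{n\geq1}L(\overline{B_n})$. Because $\mathcal{H}$ is complete it is a Baire space, so some $\overline{L(\overline{B_n})}$ has nonempty interior. Dilating by $n^{-1}$ (using right multiplication by the real scalar $n^{-1}$, which is $\mathbb{H}$-linear), I obtain that $\overline{L(\overline{B_1})}$ has nonempty interior; say it contains a ball $y_0+B_\varepsilon$ for some $y_0\in\mathcal{H}$ and $\varepsilon>0$.

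\textbf{Step 2 (centering at $0$).} Using that $L(\overline{B_1})$ is symmetric (right multiplication by the real scalar $-1$ preserves $\overline{B_1}$) and convex (the image of a convex set under an $\mathbb{R}$-linear map is convex), the standard manipulation $\tfrac12(y-y_0)+\tfrac12(y+y_0)\in \overline{L(\overline{B_1})}$ for $y\in B_\varepsilon$ shows that $B_\varepsilon\subset\overline{L(\overline{B_1})}$. Scaling by real positive factors gives $B_{\varepsilon/2^n}\subset\overline{L(B_{1/2^n})}$ for every $n\geq0$.

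\textbf{Step 3 (removing the closure).} This is the main technical point. Given $y\in B_{\varepsilon/2}$, I will inductively pick $x_n\in B_{1/2^n}$ with $\bigl\|y-L(x_0+\cdots+x_n)\bigr\|<\varepsilon/2^{n+1}$: starting from $y\in B_{\varepsilon/2}\subset\overline{L(B_{1/2})}$, choose $x_0\in B_{1/2}$ with $\|y-L(x_0)\|<\varepsilon/4$, then apply Step~2 to $y-L(x_0)\in B_{\varepsilon/4}\subset\overline{L(B_{1/4})}$, and iterate. Because $\sum\|x_n\|<\sum2^{-n}=2$, the partial sums form a Cauchy sequence in $\mathcal{H}$, and completeness gives a limit $x$ with $\|x\|\leq1$; continuity of $L$ yields $L(x)=y$. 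Hence $B_{\varepsilon/2}\subset L(\overline{B_1})$. A routine dilation and translation then shows that $L(U)$ is open whenever $U$ is open, completing the proof that $L$ is an open map.

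\textbf{Step 4 (bijective case).} If $L$ is bijective, right $\mathbb{H}$-linearity of $L^{-1}$ follows directly from right $\mathbb{H}$-linearity of $L$ by checking it on $L^{-1}(vq+wp)$. Openness of $L$ means exactly that $L^{-1}$ is continuous, i.e.\ $L^{-1}\in\mathbb{B}(\mathcal{H})$.

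The only place where one could worry about the quaternionic structure is Step~2, where convex combinations and symmetrization must be legal; this is safe because $\mathcal{H}$ carries a real vector space structure compatible with its norm and $L$ is automatically $\mathbb{R}$-linear. The rest of the argument is metric/topological and transfers verbatim from the classical Banach-space proof.
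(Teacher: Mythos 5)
Your proposal is correct, but note that the paper does not actually prove this theorem: it is quoted from the reference of Ghiloni--Moretti--Perotti (\cite{10}) without proof, so there is no internal argument to compare against. What you supply is the classical Banach-space open mapping proof, and your transfer to the quaternionic setting is legitimate for exactly the reason you isolate: a right quaternionic Hilbert space is in particular a real Banach space (the quaternionic norm restricts to a genuine $\mathbb{R}$-homogeneous complete norm), every $L\in\mathbb{B}(\mathcal{H})$ is additive and $\mathbb{R}$-homogeneous, and the Baire, symmetry, convexity and series-summation steps only use this real structure. The only blemishes are cosmetic: in Step 3 your indexing is inconsistent (you announce $x_n\in B_{1/2^n}$ but then construct $x_0\in B_{1/2}$, $x_1\in B_{1/4}$, \dots, so the correct bound is $\sum\|x_n\|<1$, not $<2$ -- either way $\|x\|\leq 1$ and the conclusion stands), and in Step 2 you pass silently from $\overline{L(\overline{B_r})}$ to $\overline{L(B_r)}$, which is harmless since $L(\overline{B_r})\subset\overline{L(B_r)}$ by continuity. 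Your Step 4 correctly reduces boundedness of $L^{-1}$ to openness of $L$ plus the elementary verification of right $\mathbb{H}$-linearity of the inverse. In short: the argument is sound and self-contained, and it buys the reader a proof where the paper only offers a citation.
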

\begin{theorem}[ The closed graph theorem]\label{thm7}\cite{10}
Let $\mathcal{H}$ be a right quaternionic Hilbert space and let $L:\mathcal{H}\rightarrow \mathcal{H}$ be a right $\mathbb{H}$-linear opeartor. If $Graph(L)$ is closed, then $L\in \mathbb{B}(\mathcal{H})$.\\
\end{theorem}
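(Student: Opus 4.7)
The plan is to deduce the closed graph theorem from the open mapping theorem (Theorem~\ref{thm6}), exactly as in the classical case, being careful that all the linearity and Hilbert-space structure carries over to the quaternionic setting on the right.

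First I would equip the product $\mathcal{H}\times \mathcal{H}$ with the componentwise right $\mathbb{H}$-action $(u,v)q:=(uq,vq)$ and the inner product $\langle (u_1,v_1)\mid (u_2,v_2)\rangle := \langle u_1\mid u_2\rangle+\langle v_1\mid v_2\rangle$. A direct check using properties (a)--(d) of the quaternionic inner product shows that this makes $\mathcal{H}\times\mathcal{H}$ into a right quaternionic pre-Hilbert space, and completeness in the associated norm $\|(u,v)\|^2=\|u\|^2+\|v\|^2$ follows from completeness of $\mathcal{H}$ in each coordinate. Thus $\mathcal{H}\times\mathcal{H}$ is a right quaternionic Hilbert space.

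Next, using right $\mathbb{H}$-linearity of $L$, the graph $\Gamma(L)=\{(u,Lu):u\in\mathcal{H}\}$ is a right $\mathbb{H}$-linear subspace of $\mathcal{H}\times\mathcal{H}$, and by hypothesis it is closed; hence $\Gamma(L)$ is itself a right quaternionic Hilbert space. I would then consider the coordinate projections restricted to $\Gamma(L)$:
\[
\pi_1:\Gamma(L)\to\mathcal{H},\ (u,Lu)\mapsto u,\qquad \pi_2:\Gamma(L)\to\mathcal{H},\ (u,Lu)\mapsto Lu.
\]
Both are right $\mathbb{H}$-linear, and each is bounded by $1$ since $\|\pi_j(u,Lu)\|^2\le \|u\|^2+\|Lu\|^2=\|(u,Lu)\|^2$. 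Moreover $\pi_1$ is a bijection from $\Gamma(L)$ onto $\mathcal{H}$.

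Applying Theorem~\ref{thm6} to $\pi_1$ (viewed between the two right quaternionic Hilbert spaces $\Gamma(L)$ and $\mathcal{H}$), the inverse $\pi_1^{-1}:\mathcal{H}\to\Gamma(L)$, $u\mapsto (u,Lu)$, belongs to $\mathbb{B}(\mathcal{H},\Gamma(L))$. Since $L=\pi_2\circ \pi_1^{-1}$, $L$ is a composition of bounded right $\mathbb{H}$-linear operators, hence $L\in\mathbb{B}(\mathcal{H})$.

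The only point that requires care, and where I expect the main friction to lie, is the bookkeeping around right scalars: one must verify that $\Gamma(L)$ is closed under the right $\mathbb{H}$-action (which uses right linearity of $L$) and that Theorem~\ref{thm6}, stated in the excerpt for self-maps, applies to $\pi_1:\Gamma(L)\to\mathcal{H}$. The latter reduces to the same argument run between two right quaternionic Hilbert spaces, which goes through verbatim because completeness, right $\mathbb{H}$-linearity, and the Baire category theorem (on the underlying complete metric space) are all available. Everything else is a routine transcription of the classical argument.
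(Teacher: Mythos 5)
The paper does not prove this theorem at all: it is quoted from reference \cite{10} (Ghiloni--Moretti--Perotti) as a known fact, so there is no internal proof to compare yours against. Your argument is the standard derivation of the closed graph theorem from the open mapping theorem, and it is correct in the quaternionic setting: the product $\mathcal{H}\times\mathcal{H}$ with the componentwise right action and the sum inner product is indeed a right quaternionic Hilbert space, the graph is a closed right $\mathbb{H}$-submodule (closure under the right action uses $L(uq)=L(u)q$), hence complete, and $L=\pi_2\circ\pi_1^{-1}$ with $\pi_1^{-1}$ bounded by the open mapping theorem. The one point you rightly flag is that Theorem~\ref{thm6} as stated in the paper concerns operators in $\mathbb{B}(\mathcal{H})$, i.e.\ self-maps, whereas you need it for the bijection $\pi_1:\Gamma(L)\to\mathcal{H}$ between two different right quaternionic Hilbert spaces; the proof of the open mapping theorem (Baire category on the complete domain plus right linearity) goes through verbatim in that generality, so this is a presentational gap in the source rather than a gap in your argument. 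No corrections needed.
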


\begin{definition}[the adjoint operator]\cite{10}
Let $\mathcal{H}$ be a right quaternionic Hilbert space and  $L\in \mathbb{B}(\mathcal{H})$. The adjoint operator of $L$, denoted $L^*$, is the unique operator in $\mathbb{B}(\mathcal{H})$ satisfying for all $u,v\in \mathcal{H}$:
$$\langle Lu\mid v\rangle=\langle u\mid L^*v\rangle.$$
\end{definition}
\begin{definition}\cite{10}
Let $\mathcal{H}$ be a right quaternionic Hilbert space and  $L\in \mathbb{B}(\mathcal{H})$.
\begin{enumerate}
\item $L$ is called normal if $LL^*=L^*L$.
\item $L$ is called self-adjoint if $L=L^*$.
\item $L$ is called isometric if $\| Lu\|=\|u\|$ for all $u\in \mathcal{H}$.
\item $L$ is called unitary if $LL^*=L^*L=I$, where $I$ is the identity opertaor of $\mathbb{B}(\mathcal{H})$. An operator is a unitary if and only if it is an isometric surjective operator. 
\item $L$ is called positive, and we write $L\geqslant 0$, if $\langle Lu\mid u\rangle\geqslant 0$ for all $u\in \mathcal{H}$.\\
\end{enumerate}
\end{definition}

\begin{proposition}\label{prop1}\cite{10}
Let $\mathcal{H}$ be a right quaternionic Hilbert space and  $L\in \mathbb{B}(\mathcal{H})$. Then:
\begin{enumerate}
\item $R(L)^\perp=ker(L^*)$.
\item $\overline{R(L^*)}=ker(L)^\perp.$\\
\end{enumerate}
\end{proposition}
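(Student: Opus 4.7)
The plan is to prove part (1) directly from the definition of the adjoint, and then deduce part (2) by applying (1) to $L^*$ and invoking Property 1 (namely $(A^\perp)^\perp = \overline{\langle A\rangle}$).

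For (1), I would argue by a double inclusion. A vector $v$ lies in $R(L)^\perp$ if and only if $\langle v\mid Lu\rangle = 0$ for every $u\in\mathcal{H}$. Using the conjugate-symmetry axiom (a) from Definition 3, this is equivalent to $\langle Lu\mid v\rangle = 0$ for every $u$, and by the defining property of the adjoint, equivalent to $\langle u\mid L^*v\rangle = 0$ for every $u\in\mathcal{H}$. Specializing $u = L^*v$ forces $\|L^*v\|^2 = 0$, so $v\in\ker(L^*)$; conversely $v\in\ker(L^*)$ clearly makes all these pairings vanish. Hence $R(L)^\perp = \ker(L^*)$.

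For (2), I would first apply part (1) with $L$ replaced by $L^*$, which gives $R(L^*)^\perp = \ker(L^{**}) = \ker(L)$ (using the involutive nature of the adjoint on $\mathbb{B}(\mathcal{H})$, which is a standard consequence of Theorem 3 / the Riesz representation theorem stated in the excerpt). Taking orthogonal complements on both sides yields $R(L^*)^{\perp\perp} = \ker(L)^\perp$. Since $R(L^*)$ is already a right $\mathbb{H}$-linear subspace, Property 1(2) gives $R(L^*)^{\perp\perp} = \overline{\langle R(L^*)\rangle} = \overline{R(L^*)}$, and the identity follows.

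The only subtle point is being careful with the quaternionic conventions: the inner product is conjugate-linear in the first slot, so one must apply (a) (rather than $\mathbb{H}$-linearity in the first slot, which fails) before invoking the adjoint identity. Beyond that, the argument is the standard Hilbert-space one, and the results on the orthogonal complement gathered in Property 1 make the passage from (1) to (2) immediate without any separate density argument.
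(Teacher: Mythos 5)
Your proof is correct: part (1) is the standard double-inclusion argument via the adjoint identity and conjugate symmetry, and part (2) follows from (1) applied to $L^*$ together with $L^{**}=L$ and Property~1(2), with the quaternionic conventions handled properly. The paper itself offers no proof of this proposition --- it is quoted from \cite{10} --- so there is nothing to compare against; your argument is the expected one and fills the gap cleanly.
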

\begin{theorem}[Square root of an operator]\cite{10}
    Let \( \mathcal{H} \) be a right quaternionic Hilbert space and let \( L \in \mathbb{B}(\mathcal{H}) \). If \( L \geq 0 \), then there exists a unique operator in \( \mathbb{B}(\mathcal{H})\), indicated by \( \sqrt{L} \), such that \( \sqrt{L} \geq 0 \) and 
\[
\sqrt{L} \sqrt{L} = L.
\]
Furthermore, it turns out that \( \sqrt{L} \) commutes with every operator that commutes with \( L \).\\
\end{theorem}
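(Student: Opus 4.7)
The plan is to mimic the classical Riesz–Nagy iterative construction of the positive square root. Since the iteration uses only polynomials in $L$ with real coefficients, the non-commutativity of $\mathbb H$ poses no essential difficulty; only the order-theoretic manipulations with positive operators need to be re-checked. First I would reduce to the case $\|L\|\leq 1$ by replacing $L$ with $L/\|L\|$ (the case $L=0$ is trivial), and set $A=I-L$ so that $0\leq A\leq I$. Seeking $\sqrt L$ in the form $I-C$ converts the equation $(I-C)^2=L$ into the fixed-point equation $C=\tfrac12(A+C^2)$, which I would solve by the iteration $C_0=0$, $C_{n+1}=\tfrac12(A+C_n^2)$. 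An easy induction shows that each $C_n$ is a polynomial in $A$ with non-negative real coefficients, so the $C_n$ are self-adjoint, pairwise commuting, and commute with every operator in $\mathbb B(\mathcal H)$ that commutes with $L$.

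The main analytic step is the simultaneous inductive claim $0\leq C_n\leq C_{n+1}\leq I$. Monotonicity uses the factorization $C_{n+1}-C_n=\tfrac12(C_n-C_{n-1})(C_n+C_{n-1})$ made legal by commutativity; the bound $C_n\leq I$ follows by expressing $I-C_n$ as a polynomial in $A$ and $L=I-A$ with non-negative real coefficients and using self-adjointness to write $\langle A^{2m+1}u\mid u\rangle=\langle A\cdot A^m u\mid A^m u\rangle\geq 0$, and analogous identities for the mixed products $A^jL^k$. Once these order estimates are established, the real sequence $\langle C_n u\mid u\rangle$ is nondecreasing and bounded above by $\|u\|^2$, hence convergent for every $u$. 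The Cauchy–Schwarz inequality for the positive sesquilinear form $(x,y)\mapsto\langle(C_m-C_n)x\mid y\rangle$ (valid for $m\geq n$ because $C_m-C_n\geq 0$) yields $\|(C_m-C_n)u\|^2\leq\|C_m-C_n\|\cdot\langle(C_m-C_n)u\mid u\rangle\leq\langle(C_m-C_n)u\mid u\rangle$, so $\{C_n u\}$ is Cauchy in $\mathcal H$. The strong limit $C$ is a bounded self-adjoint operator with $0\leq C\leq I$ satisfying $C=\tfrac12(A+C^2)$, and $\sqrt L:=I-C$ is the desired positive square root.

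Commutation is inherited from the iteration: any $T$ commuting with $L$ commutes with $A$, hence with each polynomial $C_n$, and strong limits preserve commutation, so $T\sqrt L=\sqrt L T$. For uniqueness, if $B\geq 0$ with $B^2=L$, then $B$ commutes with $L$ hence with $\sqrt L$; thus $(\sqrt L-B)(\sqrt L+B)=L-L=0$. Fixing $u$, setting $w=(\sqrt L-B)u$, and applying $(\sqrt L+B)$ to $w$ gives $\langle\sqrt L w\mid w\rangle+\langle Bw\mid w\rangle=0$, so both non-negative summands vanish. Cauchy–Schwarz for the positive form $(x,y)\mapsto\langle\sqrt L x\mid y\rangle$ then forces $\sqrt L w=0$, and similarly $Bw=0$, whence $(\sqrt L-B)^2 u=0$; by self-adjointness $\|(\sqrt L-B)u\|^2=\langle(\sqrt L-B)^2 u\mid u\rangle=0$, and so $B=\sqrt L$.

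The main obstacle I anticipate is the inductive bound $0\leq C_n\leq I$: in the complex setting this is usually streamlined via the continuous functional calculus, which is not yet available in the paper. Here it must be handled by hand through direct polynomial manipulations in $A$ and $L$, using repeatedly the fact that for self-adjoint $S\geq 0$ and any vector of the form $v=S^m u$ one has $\langle Sv\mid v\rangle\geq 0$. Once this order-theoretic step is secured, every other component of the argument carries over from the complex Hilbert space case essentially verbatim.
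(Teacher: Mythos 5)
The paper does not prove this theorem: it is imported verbatim from the reference of Ghiloni--Moretti--Perotti, so there is no in-paper argument to compare against. Your Riesz--Nagy iteration is the standard self-contained route and it does go through in the quaternionic setting, for exactly the reason you identify: every object in the construction is a polynomial in $L$ with \emph{real} coefficients, and real scalars are central in $\mathbb{H}$, so non-commutativity of the scalars never intervenes. Three small points are worth securing explicitly. First, the paper's definition of $L\geq 0$ only requires $\langle Lu\mid u\rangle\geq 0$; your argument uses self-adjointness of $L$ (hence of $A=I-L$) throughout, so you should either note that positivity implies self-adjointness on a quaternionic Hilbert space (true, and proved in the cited reference) or build self-adjointness into the hypothesis. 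Second, your route to $C_n\leq I$ via expanding $I-C_n$ in mixed monomials $A^jL^k$ is workable but heavier than necessary, and the odd-odd monomials force you to prove $AL\geq 0$ first; the cleaner induction is $\|C_{n+1}\|\leq\tfrac12(\|A\|+\|C_n\|^2)\leq 1$ combined with the elementary fact that a positive operator $P$ satisfies $P\leq\|P\|I$ (ordinary Cauchy--Schwarz). Third, the generalized Cauchy--Schwarz inequality $\|Pu\|^2\leq\|P\|\langle Pu\mid u\rangle$ for positive $P$, which you invoke both for strong convergence and for uniqueness, must be re-derived for quaternion-valued positive Hermitian forms, but the classical proof transfers word for word. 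With these details filled in, the argument is complete and correct.
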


\begin{theorem}[ The closed range theorem]\label{thm9}\cite{10}
 Let \( \mathcal{H} \) be a right quaternionic Hilbert space and let \( L \in \mathbb{B}(\mathcal{H}) \). If $R(L)$ is closed, then $R(L^*)$ is also closed.\\
\end{theorem}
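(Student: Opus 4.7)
The goal is to prove $R(L^*) = \ker(L)^\perp$, as the right-hand side is automatically closed. Proposition~\ref{prop1} already gives $R(L^*) \subseteq \overline{R(L^*)} = \ker(L)^\perp$, so the real content is the reverse inclusion $\ker(L)^\perp \subseteq R(L^*)$.

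First I would exploit closedness of $R(L)$ to obtain a lower bound on $L$ restricted to $\ker(L)^\perp$. Since $L$ is continuous, $\ker(L)$ is closed, and Property~1 supplies the decomposition $\mathcal{H} = \ker(L) \oplus \ker(L)^\perp$. The restriction $\tilde L : \ker(L)^\perp \to R(L)$ is then a right $\mathbb{H}$-linear bijection between two closed, hence complete, subspaces. The open mapping theorem (Theorem~\ref{thm6}) applied to $\tilde L$ yields a bounded inverse and therefore a constant $c>0$ with $\|Lu\| \geq c\|u\|$ for every $u \in \ker(L)^\perp$.

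Next, fix $x \in \ker(L)^\perp$. For each $v \in R(L)$ there is a unique $u \in \ker(L)^\perp$ with $v = Lu$, and I would set $\phi(v) = \langle x \mid u \rangle$. Property~(d) of the inner product, combined with the stability of $\ker(L)^\perp$ under right scalar multiplication (a consequence of property~(i)), makes $\phi$ right $\mathbb{H}$-linear, while $|\phi(v)| \leq \|x\|\|u\| \leq c^{-1}\|x\|\|v\|$ makes it bounded. Using Property~1 again (now with $R(L)$ closed) I would extend $\phi$ by zero on $R(L)^\perp$; both summands being stable under right quaternionic multiplication, the extension is a bounded right $\mathbb{H}$-linear functional on $\mathcal{H}$.

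The quaternionic Riesz representation theorem then produces $y \in \mathcal{H}$ with $\phi(\cdot) = \langle y \mid \cdot \rangle$. For any $u \in \ker(L)^\perp$, $\langle L^*y \mid u \rangle = \langle y \mid Lu \rangle = \phi(Lu) = \langle x \mid u \rangle$, so $L^*y - x$ is orthogonal to $\ker(L)^\perp$; but $L^*y$ (by Proposition~\ref{prop1}) and $x$ (by hypothesis) both lie in $\ker(L)^\perp$, forcing $L^*y = x$ and placing $x$ in $R(L^*)$. I expect the main obstacle to be purely bookkeeping: quaternionic non-commutativity requires that scalars be placed consistently on the right throughout the construction of $\phi$ and its extension, and the conjugate-linearity of the Riesz correspondence means the two sides of $\langle\cdot\mid\cdot\rangle$ cannot be swapped casually. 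Once this is handled carefully, the argument tracks the familiar complex-case proof.
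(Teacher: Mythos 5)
The paper does not prove this theorem --- it is quoted from reference \cite{10} without proof --- so there is no internal argument to compare against; your proposal must stand on its own, and it does. The strategy (show $R(L^*)=\ker(L)^\perp$, with the nontrivial inclusion obtained by representing the functional $v=Lu\mapsto \langle x\mid u\rangle$ via Riesz) is the classical proof transplanted to the quaternionic setting, and every ingredient you invoke is available in the paper: the bijectivity of $L|_{\ker(L)^\perp}:\ker(L)^\perp\to R(L)$ and the resulting lower bound $\|Lu\|\ge c\|u\|$ is exactly the argument the paper itself runs in the paragraph preceding the definition of the pseudo-inverse; the stability of $\ker(L)^\perp$ and $R(L)^\perp$ under right scalar multiplication follows from property (i); and the identity $\langle L^*y\mid u\rangle=\langle y\mid Lu\rangle$ is the conjugate of the defining relation of the adjoint. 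The final step is airtight because $L^*y-x$ lies in $\ker(L)^\perp$ and is simultaneously orthogonal to $\ker(L)^\perp$, hence is orthogonal to itself. If you want a variant that avoids the Riesz theorem and the extension-by-zero bookkeeping entirely, note that for $v=Lu\in R(L)$ with $u\in\ker(L)^\perp$ one has $\|v\|^2=\langle L^*v\mid u\rangle\le c^{-1}\|L^*v\|\,\|v\|$, so $L^*$ is bounded below on $R(L)=\ker(L^*)^\perp$; the Cauchy-sequence argument of Proposition \ref{prop4} then gives closedness of $R(L^*)=L^*\bigl(\ker(L^*)^\perp\bigr)$ directly. Either way, your proof is correct.
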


\begin{proposition}\label{prop2}
 Let \( \mathcal{H} \) be a right quaternionic Hilbert space and let \( L \in \mathbb{B}(\mathcal{H}) \). If $L$ is bounded below, i.e. there exists $M> 0$ such that for all $u\in \mathcal{H}$, $M\|u\|\leq \| Lu\|$, and $L^*$ is injective, then $L$ is invertible.
\end{proposition}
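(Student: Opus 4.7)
The plan is to show that $L$ is bijective and then invoke the open mapping theorem (Theorem \ref{thm6}) to conclude that $L^{-1} \in \mathbb{B}(\mathcal{H})$. Injectivity of $L$ is immediate from the bounded-below hypothesis: if $Lu = 0$ then $M\|u\| \leq \|Lu\| = 0$, forcing $u = 0$. So the real content is surjectivity.

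To obtain surjectivity, I would first show that $R(L)$ is closed. Take a sequence $\{Lu_n\} \subset R(L)$ converging to some $w \in \mathcal{H}$. The bounded-below inequality applied to $u_n - u_m$ gives $M\|u_n - u_m\| \leq \|Lu_n - Lu_m\|$, so $\{u_n\}$ is Cauchy in $\mathcal{H}$ and converges to some $u$. By continuity of $L$, $Lu_n \to Lu$, hence $w = Lu \in R(L)$. Thus $R(L)$ is closed.

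Next, I would combine this with the hypothesis on $L^*$. By Proposition \ref{prop1}(1), $R(L)^{\perp} = \ker(L^*) = \{0\}$ since $L^*$ is injective. Then Property 1(3) applied to the closed subspace $R(L)$ yields
\[
\mathcal{H} = \overline{R(L)} \oplus R(L)^{\perp} = R(L) \oplus \{0\} = R(L),
\]
so $L$ is surjective. Being also injective, $L$ is bijective, and Theorem \ref{thm6} ensures $L^{-1} \in \mathbb{B}(\mathcal{H})$.

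There is no real obstacle here; the argument is the standard Hilbert-space template, and the quaternionic setting does not interfere because every ingredient (orthogonal decomposition for closed subspaces, the identification of $\ker(L^*)$ with $R(L)^{\perp}$, and the open mapping theorem) has already been recorded in the preliminaries for right quaternionic Hilbert spaces. The only mild subtlety worth noting is that one must use the closed-range step to upgrade $\overline{R(L)}$ to $R(L)$; without it, one would only get density of $R(L)$, not surjectivity.
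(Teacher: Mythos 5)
Your proposal is correct and follows essentially the same route as the paper: injectivity from the bounded-below estimate, closedness of $R(L)$ via the Cauchy-sequence argument, and the identification $R(L)^{\perp}=\ker(L^*)=\{0\}$ to conclude surjectivity. The only cosmetic difference is the order (the paper first deduces density of $R(L)$ from injectivity of $L^*$ and then proves closedness), which does not change the substance of the argument.
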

\begin{proof}
Since $L$ is bounded below, then it is injective. Assume that \( L^* \) is injective, so \( \operatorname{Ker}(L^*) = \{0\} \), hence $\overline{R(L)} = \mathcal{H}$. Let us show that $R(L)$ is closed in \( \mathcal{H} \). For this,  let $y \in \overline{R(L)}$, so there exists a sequence $\{y_n\}_{n \geq 1}$ in $ R(L)$ such that $\displaystyle{ \lim_{n \to \infty} y_n = y}$. For each $n \geq 1$, there exists $x_n \in \mathcal{H}$ such that $ y_n = L(x_n)$. We have:
\[
m \| x_n - x_m \| \leq \| L(x_n) - L(x_m) \| = \| y_n - y_m \|, \quad \forall n, m \geq 1.
\]
This implies that $\{x_n\}_{n \geq 1}$ is a Cauchy sequence in $\mathcal{H}$, hence convergent, and let $x$ be its limit. Since $L$ is bounded, we have $y = \displaystyle{\lim_{n \to \infty} y_n = \lim_{n \to \infty} L(x_n) = L(x)}$. Thus, $y \in R(L)$. Therefore, $R(L)$ is closed, so $ R(L) = \mathcal{H}$. Thus, $ L$ is surjective and therefore invertible.
\end{proof}
\begin{proposition}\label{prop3}
 Let $ \mathcal{H}$ be a right quaternionic Hilbert space and let \( L \in \mathbb{B}(\mathcal{H}) \) be normal. Then, the following statements are equivalent:
 \begin{enumerate}
 \item $L$ is invertible.
 \item $L$ is bounded below.
 \end{enumerate}
\end{proposition}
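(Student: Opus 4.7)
The plan is to establish the two implications separately, with the forward direction being essentially routine and the reverse direction reduced to Proposition \ref{prop2} via the distinctive norm identity available for normal operators.

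For $(1) \Rightarrow (2)$, I would assume $L$ is invertible. Since $L \in \mathbb{B}(\mathcal{H})$ is bijective, Theorem \ref{thm6} (the open mapping theorem) yields $L^{-1} \in \mathbb{B}(\mathcal{H})$. Then for every $u \in \mathcal{H}$ I can write $\|u\| = \|L^{-1}Lu\| \leq \|L^{-1}\|\,\|Lu\|$, so $L$ is bounded below with constant $\|L^{-1}\|^{-1}$. No use of normality is needed here.

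For $(2) \Rightarrow (1)$, the strategy is to invoke Proposition \ref{prop2}: it suffices to show that $L^*$ is injective. Here is where normality enters. The key identity for a normal operator is $\|Lu\| = \|L^*u\|$ for every $u \in \mathcal{H}$, which follows from
\[
\|Lu\|^2 = \langle Lu \mid Lu\rangle = \langle u \mid L^*Lu\rangle = \langle u \mid LL^*u\rangle = \langle L^*u \mid L^*u\rangle = \|L^*u\|^2,
\]
using $L^*L = LL^*$ and the defining property of the adjoint (the inner-product manipulations work verbatim in the quaternionic setting since scalars on the right of the inner product behave as in Definition 3). Combining this with the bounded-below hypothesis $M\|u\| \leq \|Lu\|$, I obtain $M\|u\| \leq \|L^*u\|$, so $L^*$ is itself bounded below and in particular injective. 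Proposition \ref{prop2} then delivers invertibility of $L$.

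The only step that requires any genuine care is the normal identity $\|Lu\|=\|L^*u\|$, because in the quaternionic setting one must be vigilant about the order of scalars when manipulating inner products; however, all the manipulations above stay within the scalar-free part of the formalism, so no complication arises. With that identity in hand, the reverse implication is immediate, and the result is proved.
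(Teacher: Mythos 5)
Your proposal is correct and follows essentially the same route as the paper: the forward implication via $\|u\| \leq \|L^{-1}\|\,\|Lu\|$, and the reverse implication via the normality identity $\|Lu\| = \|L^*u\|$ combined with Proposition \ref{prop2}. You simply spell out the details that the paper leaves implicit.
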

\begin{proof}
It is clear that $1.$ implies $2.$ by taking $M = \displaystyle{\frac{1}{\| L^{-1} \|}}$ as the lower bound. Conversely, 
we use the fact that $\| Lu\| = \| L^*u \|$, $\forall u \in \mathcal{H}$. Then, we apply Proposition \ref{prop2}.
\end{proof}
\begin{proposition}\label{prop4}
 Let \( \mathcal{H},\mathcal{K} \) be two right quaternionic Hilbert spaces and let \( L \in \mathbb{B}(\mathcal{K},\mathcal{H}) \). Then, the following statements are equivalent:
 \begin{enumerate}
 \item $L$ is injective with closed range.
 \item $L$ is bounded below.
 \item $L^*$ is surjective.
 \end{enumerate}
\end{proposition}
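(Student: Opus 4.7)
My plan is to prove this equivalence by establishing the two half-cycles $(1)\Leftrightarrow(2)$ and $(1)\Leftrightarrow(3)$ separately. The first will rest on the open mapping theorem (Theorem \ref{thm6}), while the second will follow from combining the two relations in Proposition \ref{prop1} with the closed range theorem (Theorem \ref{thm9}).

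For $(2)\Rightarrow(1)$, I would reuse verbatim the Cauchy-sequence argument already given in Proposition \ref{prop2}: the lower bound immediately forces injectivity, and if $Lu_n\to y$ in $\mathcal{H}$ then $M\|u_n-u_m\|\leq\|Lu_n-Lu_m\|$ shows that $\{u_n\}$ is Cauchy in $\mathcal{K}$, so $y=Lu\in R(L)$, proving $R(L)$ is closed. For $(1)\Rightarrow(2)$, the key observation is that a closed subspace of a right quaternionic Hilbert space is itself a right quaternionic Hilbert space, so $L$ regarded as a map $\mathcal{K}\to R(L)$ is a bounded bijection between two Hilbert spaces. Theorem \ref{thm6} then supplies a bounded inverse whose norm reciprocal is the required lower bound. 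Theorem \ref{thm6} is stated in the paper for $\mathbb{B}(\mathcal{H})$, but its proof is insensitive to whether domain and codomain coincide, so the application is legitimate.

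For $(1)\Rightarrow(3)$, injectivity together with Proposition \ref{prop1} yields $\overline{R(L^*)}=\ker(L)^\perp=\mathcal{K}$, so $R(L^*)$ is dense; on the other hand, closedness of $R(L)$ combined with Theorem \ref{thm9} makes $R(L^*)$ closed, and the two together force $R(L^*)=\mathcal{K}$. For $(3)\Rightarrow(1)$, surjectivity of $L^*$ gives $\overline{R(L^*)}=\mathcal{K}=\ker(L)^\perp$, whence $\ker(L)=\{0\}$; to obtain the closed range, I would apply Theorem \ref{thm9} not to $L$ itself but to $L^*$, using the identity $L^{**}=L$, so that the trivial closedness of $R(L^*)=\mathcal{K}$ transfers to $R(L^{**})=R(L)$.

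The main obstacle is precisely this asymmetry in the paper's version of the closed range theorem: Theorem \ref{thm9} only states that closedness of $R(L)$ implies closedness of $R(L^*)$, so to recover the reverse direction needed for $(3)\Rightarrow(1)$ I must feed $L^*$ into the theorem. This in turn requires the relation $L^{**}=L$, which is not isolated in the excerpt but is an immediate consequence of the defining identity $\langle Lu\mid v\rangle=\langle u\mid L^*v\rangle$ and the uniqueness of the adjoint, and thus poses no real difficulty.
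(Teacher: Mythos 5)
Your proposal is correct and follows essentially the same route as the paper: the cycle $(1)\Leftrightarrow(2)$ via the open mapping theorem plus the Cauchy-sequence argument, and $(1)\Leftrightarrow(3)$ via Proposition \ref{prop1} combined with Theorem \ref{thm9}. The only differences are cosmetic --- you derive the lower bound directly from $\|L^{-1}\|$ where the paper argues by contradiction, and you spell out the detail (applying Theorem \ref{thm9} to $L^*$ together with $L^{**}=L$ for $(3)\Rightarrow(1)$) that the paper leaves implicit in its one-line citation.
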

\begin{proof}
Proposition \ref{prop1} and Theorem \ref{thm9} together prove the equivalence $1.\Longleftrightarrow 3.$ Assume that $L$ is injective with closed range, then the operator $L : \mathcal{H} \to R(L)$ is invertible, and since $R(L)$ is closed, it follows, by the open map theorem \ref{thm6}, that $L^{-1} : R(L) \to \mathcal{H}$ is bounded. Now, assume for contradiction that $L$ is not bounded below. Then, for every $n \geq 1$, there exists $u_n \in \mathcal{K}$ with $\| u_n \| = 1$ such that $\displaystyle{\frac{1}{n}} \geq \| L(u_n) \|$, hence $Lu_n\to 0$, which implies $u_n = L^{-1}(L(u_n)) \to 0$. This is a contradiction since $\| u_n \| = 1$, $\forall n \geq 1$. Conversely,  assume that $L$ is bounded below, thus, there exists a strictly positive constant $M$ such that for all $x \in \mathcal{K}$, $M \| x \| \leq \| Lx\|$. It is clear that $L$ is injective. Let $\{x_n\}_{n \geq 1} \in \mathcal{K}$ be such that $Lx_n\to y \in \mathcal{H}$ and let's  show that $y \in R(L)$. We have 

\[
\alpha \| x_n - x_m \| \leq \| L(x_n - x_m) \| = \| L(x_n) - L(x_m) \|,
\]

thus $\{x_n\}_{n \geq 1}$ is a Cauchy sequence in $\mathcal{K}$, which means it converges to some $x \in \mathcal{K}$. Since $L$ is bounded, $Lx_n \to L(x)$, and by the uniqueness of limits, we conclude that $y = L(x)$ and therefore $y \in R(L)$. Hence, $R(L)$ is closed.
\end{proof}

Let $L \in \mathbb{B}(\mathcal{K},\mathcal{H})$ be a right $\mathbb{H}$-linear bounded operator with closed range, and let $R_L = \operatorname{Im}(L)$
and $N_L = \operatorname{Ker}(L)$. The restriction of $L$ on $N_L^\perp$, denoted $L_{|N_L^\perp}$, is injective.
Indeed, if $x \in N_L^\perp$, we have
\[
L_{|N_L^\perp}(x) = 0 \implies x \in N_L^\perp \cap N_L = 0.
\]
On the other hand, we have $L_{|N_L^\perp}(N_L^\perp) = L(\mathcal{K}) = R_L$. Therefore:
\[
L_{|N_L^\perp} : N_L^\perp \to R_L
\]
is invertible. And since $N_L^\perp$ and $R_L$ are closed, it follows, by the open map theorem \ref{thm6}, that:
\[
(L_{|N_L^\perp})^{-1} : R_L \to N_L^\perp
\]
is bounded.
\begin{definition}[Pseudo-inverse of an operator with closed range]
Let $\mathcal{H}$ and $\mathcal{K}$ be two right quaternionic Hilbert spaces and 
\( L \in \mathbb{B}(\mathcal{K},\mathcal{H}) \) be with closed range $R_L$ and  kernel \( N_L \). The pseudo-inverse of \( L \), denoted by \( L^\dagger \), is the right $\mathbb{H}$-linear bounded operator extending 
\(
(L _{|N_L^\perp})^{-1} : R_L \to N_L^\perp
\)
to \( \mathcal{H} \) by the property \( ker(L^\dagger) = R(L^\perp) \), i.e., 
\[
L^\dagger = (L _{|N_L^\perp})^{-1} P_{R_L},
\]
where $P_{R_L}$ is orthogonoal projection onto $R_L$.\\
\end{definition}

\begin{theorem}\label{thm10}
Let \( L \in \mathbb{B}(\mathcal{K},\mathcal{H}) \) be  with closed range \( R_L \), and let \( v \in R_L \). The equation
\[
Lx = v 
\]
admits a unique solution of minimal norm. This solution is exactly \( L^\dagger(v) \).
\end{theorem}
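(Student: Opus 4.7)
The plan is to reduce the statement to the orthogonal decomposition $\mathcal{K} = N_L \oplus N_L^{\perp}$, which is available by Property~1 because $N_L = \ker(L)$ is a closed right $\mathbb{H}$-subspace of $\mathcal{K}$. First I would fix $v \in R_L$ and note that, since $v$ lies in the range, the solution set $S_v = \{x \in \mathcal{K} : Lx = v\}$ is nonempty; moreover, if $x_0 \in S_v$ is any particular solution, then $S_v = x_0 + N_L$.

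Next I would produce the canonical candidate for the minimum-norm solution by decomposing an arbitrary $x_0 \in S_v$ as $x_0 = y_0 + z_0$ with $y_0 \in N_L$ and $z_0 \in N_L^{\perp}$, and observing that $Lz_0 = Lx_0 - Ly_0 = v$, so $z_0 \in S_v \cap N_L^{\perp}$. For an arbitrary solution $x = z_0 + w$ with $w \in N_L$, the orthogonality $\langle z_0 \mid w\rangle = 0$ yields the quaternionic Pythagorean identity
\[
\|x\|^2 = \langle z_0 + w \mid z_0 + w\rangle = \|z_0\|^2 + \|w\|^2,
\]
using properties (c), (d) of the Hermitian inner product and (i). This shows $\|x\| \geq \|z_0\|$ with equality iff $w = 0$, giving both existence and uniqueness of the minimum-norm solution, namely $z_0$.

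Finally I would identify $z_0$ with $L^{\dagger}(v)$. Since $z_0 \in N_L^{\perp}$ and $L_{|N_L^{\perp}}(z_0) = v$, the invertibility of $L_{|N_L^{\perp}} : N_L^{\perp} \to R_L$ (established in the paragraph preceding the definition of $L^{\dagger}$) gives $z_0 = (L_{|N_L^{\perp}})^{-1}(v)$. Because $v \in R_L$, we have $P_{R_L}(v) = v$, whence
\[
L^{\dagger}(v) = (L_{|N_L^{\perp}})^{-1} P_{R_L}(v) = (L_{|N_L^{\perp}})^{-1}(v) = z_0,
\]
which concludes the proof.

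I do not expect a serious obstacle: the only delicate point is the Pythagorean step in the noncommutative setting, but it follows from the fact that $\langle z_0 \mid w\rangle$ and $\langle w \mid z_0\rangle$ both vanish, so no reordering of quaternionic scalars is needed. Everything else is a direct use of the orthogonal decomposition and the definition of $L^{\dagger}$.
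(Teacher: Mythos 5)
Your proposal is correct and follows essentially the same route as the paper's proof: decompose a solution along $\mathcal{K} = N_L \oplus N_L^{\perp}$, apply the Pythagorean identity to get minimality and uniqueness, and identify the $N_L^{\perp}$-component with $(L_{|N_L^{\perp}})^{-1}(v) = L^{\dagger}(v)$. The only difference is cosmetic (you build the candidate from an arbitrary solution and identify it with $L^{\dagger}(v)$ at the end, whereas the paper starts from $L^{\dagger}(v)$), and your explicit check of the quaternionic Pythagorean step is a welcome extra precision.
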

\begin{proof}
Let us first show that \( L^\dagger(v) \) is a solution. We have \( v \in R_L \), thus \( L^\dagger(v) = (L _{|N_L^\perp})^{-1}(v) \), which gives us \( L(L^\dagger(v)) = v \). Let \( g \in \mathcal{K} \) be a solution to this equation. There exist \( g_1 \in N_L \) and \( g_2 \in N_L^\perp \) such that \( g = g_1 + g_2 \).  Let us show that \( g_2 = L^\dagger(v) \): We have \( L(g) = L(g_2) \), and since \( g \) is a solution to the equation, we get \( L(g_2) = v \). Because \( g_2 \) and \( L^\dagger(v) \) belong to \( N_L^\perp \) and \( L_{|N_L^\perp} \) is invertible, it follows that \( g_2 = L^\dagger(v) \). Thus, we have \( g = g_1 + L^\dagger(v) \). Consequently, 
\[
\|g\|^2 = \|g_1\|^2 + \|L^\dagger(v)\|^2,
\]
which implies that: 
\[
\|g\| \geq \|L^\dagger(v)\|.
\]

Furthermore, we have:
\[
\|g\| = \|L^\dagger(v)\| \iff \|g_1\| = 0 \iff g_1 = 0 \iff g = L^\dagger(v).
\]
\end{proof}

\begin{definition}\cite{11}
Let $\{u_i\}_{i\in I}$ be a Bessel sequence for $\mathcal{H}$. 
\begin{enumerate}
\item The pre-frame operator of $\{u_i\}_{i\in I}$ is the right $\mathbb{H}$-linear bounded operator denoted by $T$ and  defined as follows: $$\begin{array}{rcl}
T:\ell^2(\mathbb{H})&\rightarrow& \mathcal{H}\\
q:=\{q_i\}_{i\in I}&\mapsto& \displaystyle{\sum_{i\in I}u_iq_i}. 
\end{array}$$
\item The transform opeartor of $\{u_i\}_{i\in I}$, denoted by $\theta$, is the adjoint of its pre-frame operator. explicitly $\theta$ is defined as follows:
$$\begin{array}{rcl}
\theta: \mathcal{H}&\rightarrow& \ell^2(\mathbb{H})\\
u&\mapsto&\{\langle u_i,u\rangle\}_{i\in I}.
\end{array}$$
\item The frame operator of $\{u_i\}_{i\in I}$, denoted by $S$, is the composite of $T$ and $\theta$. explicitly, $S$ is defined as follows: 
$$\begin{array}{rcl}
S:\mathcal{H}&\rightarrow& \mathcal{H}\\
u&\mapsto&\displaystyle{\sum_{i\in I}u_i\langle u_i,u\rangle}.
\end{array}$$
\end{enumerate}
\end{definition}
\begin{proposition}\cite{11}
Let \( (u_i)_{i \in I} \) be a frame of \( \mathcal{H} \). Then:
\begin{enumerate}
    \item \( T^* \) is a right $\mathbb{H}$-linear  bounded injective operator.
    \item \( T \) is a right $\mathbb{H}$-linear bounded surjective operator.
    \item \( S \) is a right $\mathbb{H}$-linear bounded, positive, and invertible operator.
\end{enumerate}
\end{proposition}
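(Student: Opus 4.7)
The plan is to express all three claims in terms of the identity
\[
\sum_{i\in I}|\langle u_i\mid u\rangle|^2 = \|T^*u\|^2 = \langle Su\mid u\rangle,
\]
which follows from $\theta=T^*$ and $S=TT^*$, and then to translate each frame bound into an operator-theoretic statement about $T^*$ or $S$.

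For part (1), I would begin by observing that the upper frame bound $B$ immediately gives $\|T^*u\|^2\le B\|u\|^2$, so $T^*$ is bounded with $\|T^*\|\le \sqrt{B}$ (it is $\mathbb{H}$-linear as an adjoint). Injectivity is just as direct: if $T^*u=0$ then $\sum_i|\langle u_i\mid u\rangle|^2=0$, and the lower frame bound forces $\|u\|=0$. In fact this computation gives the stronger statement $\sqrt{A}\,\|u\|\le\|T^*u\|$, so $T^*$ is bounded below, which will be needed below.

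For part (2), $T$ is bounded as the adjoint of a bounded operator (and $\mathbb{H}$-linear). For surjectivity I would invoke Proposition~\ref{prop4} applied to $T^*:\mathcal{H}\to\ell^2(\mathbb{H})$: since $T^*$ is bounded below (shown in part~(1)), its adjoint $(T^*)^*=T$ is surjective. The only subtlety here is the equality $(T^*)^*=T$; this is a standard consequence of the defining relation $\langle Tq\mid u\rangle=\langle q\mid T^*u\rangle$ in the quaternionic setting and can be stated without further fuss.

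For part (3), boundedness of $S=TT^*$ is automatic. Positivity follows from
\[
\langle Su\mid u\rangle=\langle TT^*u\mid u\rangle=\langle T^*u\mid T^*u\rangle=\|T^*u\|^2\ge 0.
\]
Self-adjointness is clear from $(TT^*)^*=TT^*$, so $S$ is normal. For invertibility, the key step is to use the frame inequality together with the quaternionic Cauchy--Schwarz inequality:
\[
\|Su\|\,\|u\|\ge |\langle Su\mid u\rangle|=\sum_{i\in I}|\langle u_i\mid u\rangle|^2\ge A\|u\|^2,
\]
which yields $\|Su\|\ge A\|u\|$, i.e.\ $S$ is bounded below. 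Proposition~\ref{prop3} applied to the normal operator $S$ then gives that $S$ is invertible. The main point where one has to be careful is the passage from bounded below to invertibility: in the non-self-adjoint case one would need $S^*$ injective separately (Proposition~\ref{prop2}), but normality plus Proposition~\ref{prop3} handles this cleanly here, so I do not anticipate any serious obstacle.
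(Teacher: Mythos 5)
Your proof is correct. Note that the paper states this proposition without proof, importing it from \cite{11}, so there is no in-paper argument to compare against; your derivation is nevertheless a faithful reconstruction using exactly the tools the paper itself develops: the identity $\sum_{i\in I}|\langle u_i,u\rangle|^2=\|T^*u\|^2=\langle Su,u\rangle$ from the Remark, Proposition~\ref{prop4} to pass from ``$T^*$ bounded below'' to ``$T$ surjective'', and Proposition~\ref{prop3} to pass from ``$S$ normal and bounded below'' to ``$S$ invertible''. The one point worth tightening is a mild circularity at the start: you derive boundedness of $T^*$ from the frame inequality and then recover boundedness of $T$ as $(T^*)^*$, but $T^*$ only exists as an adjoint once $T$ is already known to be a well-defined bounded operator. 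In the paper's setup this is harmless, since the pre-frame operator of a Bessel sequence is bounded by definition (with $\|T\|\le\sqrt{B}$); alternatively one can define $\theta u=\{\langle u_i,u\rangle\}_{i\in I}$ directly, bound it by the upper frame inequality, and set $T=\theta^*$. With that ordering made explicit, every step of your argument goes through.
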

\begin{remark}
Let $\{u_i\}_{i\in I}$ be a Bessel sequence for $\mathcal{H}$. Then, for all $u\in \mathcal{H}$, we have: $$\displaystyle{\sum_{i\in I}\vert \langle u_i,u\rangle \vert^2}=\langle Su,u\rangle=\|T^*u\|^2=\|\theta u\|^2.$$
\end{remark}
\section{Frame coefficients}
In all this section $\mathcal{H}$ is a right quaternionic Hilbert space, $\mathbb{B}(\mathcal{H})$ is the set of all right $\mathbb{H}$-linear bounded operators on $\mathcal{H}$ and $I$ is a countable set. In what follows, we show that any vector in a right quaternionic Hilbert space can be expressed (in a generally non-unique way) in terms of the elements of a given frame and that it admits a natural representation with an interesting characteristic.\\

\begin{proposition}
Let $(u_i)_{i\in I}$ be a frame of $\mathcal{H}$. Then:
$$ \forall u \in \mathcal{H}, \, u = \sum_{i\in I} u_i\langle u_i,S^{-1}u \rangle  = \sum_{i\in I} S^{-1}u_i\langle u_i, u\rangle. $$
\begin{enumerate}
\item[]$\rightarrow$ The coefficients $\{\langle u_i,S^{-1}u\}_{i\in I}$ are called the frame coefficients for $u$. 
\item[] $\rightarrow$ The expression $ u = \displaystyle{\sum_{i\in I} u_i\langle u_i,S^{-1}u \rangle}$  is often called the natural representation of $u$.
\end{enumerate}
\end{proposition}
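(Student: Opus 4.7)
The plan is to exploit the fact that, by the previous proposition, the frame operator $S$ is a bounded, positive, and invertible right $\mathbb{H}$-linear operator, so $S^{-1} \in \mathbb{B}(\mathcal{H})$ exists. The key observation for the first equality is that $S$ admits the explicit series representation $Sv = \sum_{i \in I} u_i \langle u_i \mid v \rangle$ by definition. Applied to $v = S^{-1}u$, this immediately yields
$$u = S(S^{-1}u) = \sum_{i \in I} u_i \langle u_i \mid S^{-1}u \rangle,$$
so the first equality is essentially a direct substitution that requires no further argument.

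For the second equality I would instead start from $Su = \sum_{i\in I} u_i \langle u_i \mid u\rangle$ and apply $S^{-1}$, using that a bounded right $\mathbb{H}$-linear operator commutes with convergent series (by continuity) and with right scalar multiplication (by right $\mathbb{H}$-linearity). Concretely, letting $w_N = \sum_{i \in F_N} u_i \langle u_i \mid u\rangle$ denote the partial sums along an increasing sequence $F_N \uparrow I$ of finite subsets, one has $w_N \to Su$; then $S^{-1}w_N \to S^{-1}(Su) = u$ by boundedness of $S^{-1}$; and by right $\mathbb{H}$-linearity each summand $S^{-1}\bigl(u_i \langle u_i \mid u\rangle\bigr)$ equals $S^{-1}(u_i)\,\langle u_i \mid u \rangle$, so the partial sums of $\sum_i S^{-1}u_i \,\langle u_i \mid u \rangle$ coincide with $S^{-1}w_N$. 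Passing to the limit delivers the second equality.

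The only genuine obstacle is justifying the interchange of $S^{-1}$ with the infinite sum, which in the quaternionic setting requires some care that the scalar $\langle u_i \mid u \rangle$ sits on the right of the vector so that right $\mathbb{H}$-linearity genuinely applies. The convergence of $\sum_i u_i \langle u_i \mid u\rangle$ itself is not an issue: it is precisely $T\theta u$, and since $\theta u = \{\langle u_i \mid u\rangle\}_i \in \ell^2(\mathbb{H})$ by the Bessel property and $T \in \mathbb{B}(\ell^2(\mathbb{H}),\mathcal{H})$, the series converges in $\mathcal{H}$. Both representations then follow by the two symmetric applications of $S$ and $S^{-1}$ described above.
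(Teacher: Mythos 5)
Your proposal is correct and follows exactly the paper's own argument: the first identity from $u=S(S^{-1}u)$ and the definition of $S$, the second from $u=S^{-1}(Su)$ with $S^{-1}$ pulled inside the series. You simply spell out the continuity and right $\mathbb{H}$-linearity justification for that interchange, which the paper leaves implicit.
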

\begin{proof}
For $u\in \mathcal{H}$, we have: $u=SS^{-1}u=\displaystyle{\sum_{i\in I} u_i\langle u_i,S^{-1}u \rangle}$, and on the other hand, we have, $u=S^{-1}Su=S^{-1}\left( \displaystyle{\sum_{i\in I}u_i\langle u_i,u\rangle  } \right)=\displaystyle{\sum_{i\in I} S^{-1}u_i\langle u_i,u \rangle}.$
\end{proof}
\begin{remark}
In general, the decomposition of a vector in a right quaternionic Hilbert space with respect to a frame is not unique, as shown in the following example: Let $(v_i)_{i \geq 1}$ be a Hilbert basis of $\mathcal{H}$. Then the sequence $(v_1, v_1, v_2, v_3, v_4, \ldots)$ is a frame of $\mathcal{H}$. If we set $u = 2v_1$, we can see that $u$ can also be expressed as $u = v_1 + v_1$, which means that $(1, 1, 0, 0, \ldots)$ and $(2, 0, 0, \ldots)$ are two different representations of the same vector $u$ in the frame $(v_1, v_1, v_2, v_3, v_4, \ldots)$.\\
\end{remark}

The following theorem shows the particularity of the frame coefficients.
\begin{theorem}\label{thm11}
Let $\{u_i\}_{i\in I}$ be a frame for $\mathcal{H}$ and let $u\in \mathcal{H}$ such that $u=\displaystyle{\sum_{i\in I}u_iq_i}$ where $\{q_i\}_{i\in I}\in \ell^2(\mathbb{H})$. Then: 
$$\sum_{i\in I}\vert q_i\vert^2=\sum_{i\in I}\langle u_i,S^{-1}u\rangle \vert^2+\sum_{i\in I}\vert \langle u_i,S^{-1}u\rangle-q_i\vert^2.$$
In particular, the frame coefficients $\{\langle u_i,S^{-1}u\rangle \}_{i\in I}$ of $u$ with respect to the frame $\{u_i\}_{i\in I}$ represent the representation with minimal $\ell^2(\mathbb{H})$-norm."
\end{theorem}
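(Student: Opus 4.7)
The identity is a Pythagorean-type decomposition in $\ell^2(\mathbb{H})$, so my plan is to rewrite it in terms of an orthogonality statement and reduce everything to orthogonality between the kernel of the pre-frame operator $T$ and the range of its adjoint $\theta=T^\ast$. Set $c_i := \langle u_i \mid S^{-1}u\rangle$ and $r_i := q_i - c_i$, so that $q_i = c_i + r_i$. The identity I must prove becomes $\sum_i |q_i|^2 = \sum_i |c_i|^2 + \sum_i |r_i|^2$, which will follow from showing that the sequences $c = \{c_i\}_{i \in I}$ and $r = \{r_i\}_{i \in I}$ are orthogonal in $\ell^2(\mathbb{H})$.

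To establish this orthogonality, I note two facts. First, $c = \theta(S^{-1}u) \in R(\theta) = R(T^\ast)$. Second, $Tq = u$ by hypothesis, while $Tc = T\theta S^{-1}u = S S^{-1}u = u$, so that $Tr = Tq - Tc = 0$, i.e.\ $r \in \ker T$. Using the adjoint relation, for any $v \in \mathcal{H}$,
\[
\langle r \mid \theta v\rangle_{\ell^2} = \langle r \mid T^\ast v\rangle_{\ell^2} = \langle Tr \mid v\rangle_{\mathcal{H}} = 0,
\]
so in particular $\langle r \mid c\rangle_{\ell^2} = \sum_i \overline{r_i}\, c_i = 0$. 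Taking the quaternionic conjugate of this scalar equation yields $\sum_i \overline{c_i}\, r_i = 0$ as well (this small step is where the non-commutativity needs care: the conjugate of a product reverses the order, which is exactly what converts one sum into the other).

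With both mixed sums vanishing, the computation is routine. Using $|q_i|^2 = \overline{q_i} q_i = (\overline{c_i}+\overline{r_i})(c_i+r_i)$, I expand
\[
|q_i|^2 = |c_i|^2 + \overline{c_i} r_i + \overline{r_i} c_i + |r_i|^2,
\]
and summing over $i \in I$ the two cross terms vanish by the previous step, giving
\[
\sum_{i \in I} |q_i|^2 = \sum_{i \in I} |c_i|^2 + \sum_{i \in I} |r_i|^2 = \sum_{i \in I} |\langle u_i \mid S^{-1}u\rangle|^2 + \sum_{i \in I} |\langle u_i \mid S^{-1}u\rangle - q_i|^2,
\]
using $|r_i|^2 = |q_i - c_i|^2 = |c_i - q_i|^2$. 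For the ``in particular'' clause, this identity forces $\sum_i |q_i|^2 \geq \sum_i |c_i|^2$ with equality if and only if $r_i = 0$ for every $i$, which means the frame coefficients are the unique $\ell^2(\mathbb{H})$-representation of $u$ with minimal norm.

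The main obstacle I anticipate is bookkeeping around quaternionic conjugation: one must not accidentally commute scalars in the expansion of $|q_i|^2$ and must justify that $\sum_i \overline{r_i}c_i = 0$ implies $\sum_i \overline{c_i}r_i = 0$. Once that is handled, everything reduces to the orthogonal decomposition $\ell^2(\mathbb{H}) = \ker T \oplus R(T^\ast)$ coming from Proposition~\ref{prop1}, which is available to us.
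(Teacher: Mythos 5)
Your proof is correct and is essentially the paper's argument: both reduce the identity to the orthogonality in $\ell^2(\mathbb{H})$ of the coefficient sequence $c=\{\langle u_i\mid S^{-1}u\rangle\}_{i\in I}$ and the residual $r=q-c$, and your step $\langle r\mid c\rangle=\langle r\mid T^*S^{-1}u\rangle=\langle Tr\mid S^{-1}u\rangle=0$ is the same computation the paper carries out by ``multiplying both representations of $u$ by $S^{-1}u$'' to get $\sum_i|\langle u_i\mid S^{-1}u\rangle|^2=\sum_i\overline{\langle u_i\mid S^{-1}u\rangle}\,q_i$. Your write-up is merely more explicit about the underlying $\ker T\perp R(T^*)$ structure and about the order reversal under quaternionic conjugation, details the paper leaves implicit.
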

\begin{proof}
We have $\displaystyle{\sum_{i\in I}u_i\langle u_i,S^{-1}u\rangle=\sum_{i\in I}u_iq_i}$, then, by multiplying both terms by $S^{-1}u$ on the left (in the sense of the inner product), we obtain:
$$\sum_{i\in I}\vert \langle u_i,S^{-1}u\rangle\vert^2=\sum_{i\in I}\overline{\langle u_i,S^{-1}u\rangle} q_i.$$
On the other hand, we have:
$$\sum_{i\in I}\vert \langle u_i,S^{-1}u\rangle-q_i\vert^2=\sum_{i\in I}\vert \langle u_i,S^{-1}u\rangle\vert^2-2\text{Re}(\sum_{i\in I}\overline{\langle u_i,S^{-1}u\rangle} q_i)+\sum_{i\in I}\vert q_i\vert^2.$$
\end{proof}
\begin{lemma}
Let $\{u_i\}_{i\in I}$ be a frame of $\mathcal{H}$. Then for any $u\in \mathcal{H}$, we have:
\[
T^\dagger(u) = \left( \langle u_i, S^{-1}u \rangle \right)_{i\in I}.
\]
That is:
\[
T^\dagger = T^* S^{-1}.
\]
\end{lemma}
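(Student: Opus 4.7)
The plan is to identify $T^\dagger(u)$ as the unique minimal-norm solution of the equation $Tx=u$ via Theorem~\ref{thm10}, and then recognize, via Theorem~\ref{thm11}, that this minimal-norm solution is exactly the sequence of frame coefficients.

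First I would record the structural facts needed to invoke Theorem~\ref{thm10}. Since $\{u_i\}_{i\in I}$ is a frame, the preceding proposition guarantees that $T:\ell^2(\mathbb{H})\to\mathcal{H}$ is surjective, so $R_T=\mathcal{H}$ is (trivially) closed and $P_{R_T}=\operatorname{Id}_\mathcal{H}$. Therefore $T^\dagger=(T|_{N_T^\perp})^{-1}$ is defined on all of $\mathcal{H}$, and Theorem~\ref{thm10} characterizes $T^\dagger(u)$ as the unique element of $\ell^2(\mathbb{H})$ of minimal norm solving $Tx=u$.

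Next I would check that the candidate $\{\langle u_i,S^{-1}u\rangle\}_{i\in I}$ belongs to $\ell^2(\mathbb{H})$ and solves the equation. Membership is immediate, since this sequence is $\theta(S^{-1}u)=T^*(S^{-1}u)$ and $T^*$ is bounded. That it is a solution follows from
\[
T\bigl(\{\langle u_i,S^{-1}u\rangle\}_{i\in I}\bigr)=\sum_{i\in I}u_i\langle u_i,S^{-1}u\rangle=S(S^{-1}u)=u.
\]
Theorem~\ref{thm11} then supplies the minimality: every other representation $\{q_i\}_{i\in I}\in\ell^2(\mathbb{H})$ of $u$ satisfies $\sum_{i\in I}|q_i|^2\ge\sum_{i\in I}|\langle u_i,S^{-1}u\rangle|^2$. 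Combining this with the uniqueness assertion in Theorem~\ref{thm10} forces $T^\dagger(u)=\{\langle u_i,S^{-1}u\rangle\}_{i\in I}$, which is the first claimed identity; rewriting the right-hand side as $T^*S^{-1}u$ yields the second.

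There is essentially no obstacle: the content of the lemma is already packaged in the two earlier theorems, and the argument is just their juxtaposition. The only point to keep clear is the direction of the operators—$T$ goes $\ell^2(\mathbb{H})\to\mathcal{H}$, so its pseudo-inverse returns a coefficient sequence, and the minimization in Theorem~\ref{thm11} is precisely the $\ell^2$-minimization over such sequences that Theorem~\ref{thm10} singles out.
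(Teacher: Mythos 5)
Your proposal is correct and follows essentially the same route as the paper's own proof: both identify $T^\dagger(u)$ as the unique minimal-norm solution of $Tx=u$ via Theorem~\ref{thm10} and then use Theorem~\ref{thm11} to recognize the frame coefficients $\{\langle u_i,S^{-1}u\rangle\}_{i\in I}$ as that minimizer. Your version is slightly more careful in explicitly verifying that the candidate sequence lies in $\ell^2(\mathbb{H})$ and solves the equation, but the argument is the same.
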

\begin{proof}
$T$ is a surjective right $\mathbb{H}$-linear bounded operator, then $R(T)=\mathcal{H}$. Let $u\in \mathcal{H}$ and  consider the following equation $(E): Tx=u$. By Theorem \ref{thm10}, The equation $(E)$ has a unique solution with minimal norm which is exactely $T^{\dagger}u$. On the other hand, $q=\{q_i\}_{i\in I}\in \ell^2(\mathbb{H})$ is a solution to $(E)$, if and only if, $\displaystyle{\sum_{i\in I}u_iq_i}=u$, then the unique solution with minimal norm of $(E)$ is, by Theorem \ref{thm11}, $\{\langle u_i,S^{-1}u\rangle\}_{i\in I}$. Hence $T^{\dagger}u=\{\langle u_i,S^{-1}u\rangle\}_{i\in I}=T^*S^{-1}u$.

\end{proof}
By definition of a frame, the optimal frame bounds are given by:
\[
A_{opt} := \inf_{\|f\|=1} \langle Sf, f \rangle, \quad B_{opt} := \sup_{\|f\|=1} \langle Sf, f \rangle.
\]

The following theorem express the optimal frame bounds of a frame using its assiciated operators.
\begin{theorem}\label{thm12}
Let $\{u_i\}_{i\in I}\subset \mathcal{H}$ be a frame, $T$ and $S$ be, respectively, its pre-frame operator  and its frame operator. let $A_{opt}\leq B_{opt}$ be the optimal frame bounds of $\{u_i\}_{i\in I}$. Then: 
\begin{enumerate}
\item $A_{opt}=\displaystyle{\frac{1}{\|S^{-1}\|}=\frac{1}{\|T^{\dagger}\|^2}}.$
\item $B_{opt}=\|S\|=\|T\|^2.$
\end{enumerate}
\end{theorem}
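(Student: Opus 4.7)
The approach is to reduce the theorem to two spectral facts about the positive invertible frame operator $S$, and then use the structural identities $S = TT^*$ and $T^\dagger = T^*S^{-1}$ (the latter coming from the preceding lemma) to transfer these facts from $S$ to $T$ and $T^\dagger$.

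First I would establish the two operator-norm identities. Since $\theta = T^*$ by definition, $S = T\theta = TT^*$, and the standard argument $\|Tf\|^2 = \langle T^*Tf \mid f\rangle \le \|T^*T\|\,\|f\|^2$ combined with $\|T^*T\| \le \|T^*\|\,\|T\| = \|T\|^2$ gives the C*-type identity $\|T\|^2 = \|T^*T\| = \|TT^*\| = \|S\|$. The proof uses only Cauchy--Schwarz and the defining property of the adjoint, both of which are available in the quaternionic setting. For the pseudo-inverse, combining $T^\dagger = T^*S^{-1}$ with the self-adjointness of $S^{-1}$ (inherited from the self-adjoint positive invertible $S$) yields
\[
(T^\dagger)^* T^\dagger \;=\; S^{-1} T T^* S^{-1} \;=\; S^{-1} S S^{-1} \;=\; S^{-1},
\]
so a further application of the same C*-type identity gives $\|T^\dagger\|^2 = \|(T^\dagger)^* T^\dagger\| = \|S^{-1}\|$.

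Next I would identify $B_{\mathrm{opt}}$ with $\|S\|$ and $A_{\mathrm{opt}}$ with $1/\|S^{-1}\|$. Using the positive square root $\sqrt{S}$ available by the square root theorem recalled in the excerpt, we have $\langle Sf \mid f\rangle = \|\sqrt{S}\,f\|^2$, hence
\[
B_{\mathrm{opt}} \;=\; \sup_{\|f\|=1} \|\sqrt{S}\,f\|^2 \;=\; \|\sqrt{S}\|^2 \;=\; \|S\|,
\]
where the last equality applies the C*-type identity to the self-adjoint $\sqrt{S}$. For $A_{\mathrm{opt}}$, the analogous identity applied to the positive operator $S^{-1}$ yields $\|S^{-1}\| = \sup_{\|f\|=1} \langle S^{-1}f \mid f\rangle$; the invertible substitution $f = \sqrt{S}\,g$ sends the unit sphere onto $\{g : \langle Sg \mid g\rangle = 1\}$ and turns $\langle S^{-1}f \mid f\rangle$ into $\|g\|^2$, giving
\[
\|S^{-1}\| \;=\; \sup_{\langle Sg \mid g\rangle = 1} \|g\|^2 \;=\; \frac{1}{\inf_{\|g\|=1} \langle Sg \mid g\rangle} \;=\; \frac{1}{A_{\mathrm{opt}}}.
\]

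The point that requires the most care is verifying that the spectral identity $\|P\| = \sup_{\|f\|=1} \langle Pf \mid f\rangle$ for positive self-adjoint $P$ really does carry over to the quaternionic setting. The sketch above reduces it to the existence of $\sqrt{P}$, Cauchy--Schwarz, and the defining identity of the adjoint, all of which are explicit in the preliminaries, so the transfer should be routine. The only real bookkeeping issue is the non-commutativity of $\mathbb{H}$, which produces the asymmetry $\langle uq \mid v\rangle = \bar q \langle u \mid v\rangle$ versus $\langle u \mid vq\rangle = \langle u \mid v\rangle q$; this affects the placement of scalars but not the structure of any of the arguments used above.
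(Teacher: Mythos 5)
Your proposal is correct and takes essentially the same route as the paper: both identify $B_{\mathrm{opt}}=\|\sqrt{S}\|^{2}=\|S\|$ and $A_{\mathrm{opt}}=1/\|S^{-1}\|$ via the positive square root of $S$, and both derive $\|T\|^{2}=\|TT^{*}\|=\|S\|$ and $\|T^{\dagger}\|^{2}=\|(T^{\dagger})^{*}T^{\dagger}\|=\|S^{-1}SS^{-1}\|=\|S^{-1}\|$ from the C*-identity together with $T^{\dagger}=T^{*}S^{-1}$. The only cosmetic difference is that the paper reads $A_{\mathrm{opt}}=1/\|S^{-1/2}\|^{2}$ directly from $\inf_{\|f\|=1}\|S^{1/2}f\|^{2}$, whereas you reach the same identity through the variational formula for $\|S^{-1}\|$ and the substitution $f=\sqrt{S}\,g$.
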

\begin{proof}
We have $\langle Sf,f\rangle=\langle S^{\frac{1}{2}}f,S^{\frac{1}{2}}f\rangle=\|S^{\frac{1}{2}}f\|^2$, then $B_{opt}=\|S^{\frac{1}{2}}\|^2=\|S\|$ and $A_{opt}=\displaystyle{\frac{1}{\|S^{\frac{-1}{2}}\|^2}}=\displaystyle{\frac{1}{\|S^{-1}\|}}.$ On the other hand, we have $\|S\|=\|TT^*\|=\|T\|^2$ and since $T^{\dagger}=T^*S^{-1}$, then $\|T^{\dagger}\|^2=\|T^{\dagger^*}T^{\dagger}\|=\|S^{-1}TT^*S^{-1}\|=\|S^{-1}SS^{-1}\|=\|S^{-1}\|.$
\end{proof}

Let $\{u_i\}_{i\in I}$ be a frame for $\mathcal{H}$ with the optimal frame bounds $A\leq B$.  If $R\in \mathbb{B}(\mathcal{H})$ and $u\in \mathcal{H}$, the frame coefficients of $Ru$ are $\{\langle u_i,S^{-1}Ru\rangle\}_{i\in I}$ . An interesting question arises: Can we determine the frame coefficients of 
$Ru$ from those of $u$?

We consider the map defined as follows: 
$$\begin{array}{rcl}
\Lambda:\ell^2(\mathbb{H})&\rightarrow& \ell^2(\mathbb{H})\\
\{q_i\}_{i\in I}&\mapsto& \left\{\displaystyle{\sum_{i\in I}\langle S^{-1}u_n,Ru_i\rangle q_i}\right\}_{n\in I}.
\end{array}$$
\begin{proposition}
$\Lambda$ is a well defined right $\mathbb{H}$-linear bounded operator. i.e., $\Lambda\in \mathbb{B}(\mathcal{H})$.
\end{proposition}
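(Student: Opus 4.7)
The plan is to recognize $\Lambda$ as a composition of the bounded operators already associated to the frame. Specifically, I will show that $\Lambda = T^{*}S^{-1}RT$, after which boundedness is immediate.

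First I would compute the $n$-th coordinate of $\Lambda(\{q_i\})$ by pulling $q_i$ inside the inner product using right $\mathbb{H}$-linearity in the second slot: $\langle S^{-1}u_n \mid Ru_i\rangle q_i = \langle S^{-1}u_n \mid Ru_i q_i\rangle$. Since $q = \{q_i\}_{i\in I} \in \ell^2(\mathbb{H})$, the pre-frame operator $T$ gives a convergent sum $Tq = \sum_{i\in I} u_i q_i \in \mathcal{H}$, and by boundedness of $R$ we have $RTq = \sum_{i\in I} Ru_i q_i$ with convergence in $\mathcal{H}$. Continuity of the inner product in the second argument then lets me pass the sum inside:
\[
\sum_{i\in I} \langle S^{-1}u_n \mid Ru_i\rangle q_i \;=\; \Bigl\langle S^{-1}u_n \,\Big|\, \sum_{i\in I}Ru_i q_i\Bigr\rangle \;=\; \langle S^{-1}u_n \mid RTq\rangle.
\]

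Next I would use that $S$ is positive and hence self-adjoint, so $S^{-1}$ is self-adjoint as well, to rewrite this as $\langle u_n \mid S^{-1}RTq\rangle$. Recalling that the transform operator satisfies $T^{*}w = \{\langle u_n \mid w\rangle\}_{n\in I}$, this identifies the $n$-th coordinate of $\Lambda(q)$ with the $n$-th coordinate of $T^{*}S^{-1}RTq$. Therefore $\Lambda q = T^{*}S^{-1}RTq$ for every $q \in \ell^2(\mathbb{H})$. Since $T \in \mathbb{B}(\ell^2(\mathbb{H}),\mathcal{H})$, $R, S^{-1} \in \mathbb{B}(\mathcal{H})$ and $T^{*} \in \mathbb{B}(\mathcal{H},\ell^2(\mathbb{H}))$, the composition lies in $\mathbb{B}(\ell^2(\mathbb{H}))$, with $\|\Lambda\| \leq \|T\|^{2}\|S^{-1}\|\|R\|$. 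Right $\mathbb{H}$-linearity comes for free from the linearity of each factor.

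The only real subtlety is the interchange of the sum and the inner product in the opening display, together with the implicit claim that the series defining $\Lambda(q)_n$ actually converges in $\mathbb{H}$ for each $n$. Both reduce to the convergence of $\sum_i Ru_i q_i$ in $\mathcal{H}$, which follows from the boundedness of $R$ applied to the $\ell^{2}$-convergent series defining $Tq$; this is the step I expect to take a little care, while the algebraic identification $\Lambda = T^{*}S^{-1}RT$ is then automatic.
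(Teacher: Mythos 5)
Your proof is correct and follows essentially the same route as the paper: both identify the $n$-th coordinate of $\Lambda q$ as $\langle u_n \mid S^{-1}RTq\rangle$ and then bound via the constituent operators (the paper phrases the final step as the frame upper inequality over finite subsets of $I$, which is just $\|T^{*}\|^{2}\leq B$ in disguise). Your explicit factorization $\Lambda = T^{*}S^{-1}RT$ is a slightly cleaner packaging of the same computation.
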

\begin{proof}
Let $\{q_i\}_{i\in I}\in \ell^2(\mathbb{H})$. Il is clear that, for all $n\in I$, $\displaystyle{\sum_{i\in I} \langle S^{-1}u_n,Ru_i\rangle q_i}\in \mathbb{H}$ since $\{\langle S^{-1}u_n,Ru_i\rangle\}_{i\in I},\; \{q_i\}_{i\in I}\in \ell^2(\mathbb{H})$.
Let $J\subset I$ be a finite subset of $I$. Then:
$$\begin{array}{rcl}
\displaystyle{\sum_{n\in J}\vert \sum_{i\in I}\langle S^{-1}u_n,Ru_i\rangle q_i\vert^2}&=&\displaystyle{\sum_{n\in J}\vert \langle S^{-1}u_n,RT(\{q_i\}_{i\in I}\rangle \vert^2}\\
&=&\displaystyle{\sum_{n\in J}\vert \langle u_n,S^{-1}RT(\{q_i\}_{i\in I})\rangle \vert^2}\\
&\leq& B\|S^{-1}RT(\{q_i\}_{i\in I})\|^2\leq B\|S^{-1}\|^2\|R\|^2\|T\|^2\|\{q_i\}_{i\in I}\|\\
&=&\left(\displaystyle{\frac{B\|R\|}{A}}\right)^2\, \left\|\{q_i\}_{i\in I}\right\|^2.
\end{array}$$
Hence, $\Lambda$ is well defined, clearly right $\mathbb{H}$-linear and bounded operator, moreover $\|\Lambda\|\leq \displaystyle{\frac{B\|R\|}{A}}$.\\
\end{proof}

The answer to the above question is given in the following proposition.
\begin{proposition}
For all $u\in \mathcal{H}$, the frame coefficients of $Ru$ are obtained from those of $u$ via
the right $\mathbb{H}$-linear bounded operator $\Lambda$.
\end{proposition}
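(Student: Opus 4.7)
The plan is to reduce the identity $\Lambda(\{\langle u_i,S^{-1}u\rangle\}_{i\in I}) = \{\langle u_n,S^{-1}Ru\rangle\}_{n\in I}$ to the basic identities $T T^* = S$ and $(S^{-1})^* = S^{-1}$ (the latter because $S$ is positive, hence self-adjoint, hence so is $S^{-1}$).

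First I would set $q_i := \langle u_i,S^{-1}u\rangle$, so that by definition of the transform operator we have $\{q_i\}_{i\in I} = \theta(S^{-1}u) = T^*S^{-1}u \in \ell^2(\mathbb{H})$. The $n$-th component of $\Lambda(\{q_i\}_{i\in I})$ is then $\sum_{i\in I}\langle S^{-1}u_n,Ru_i\rangle q_i$, and the key step is to recognize this sum as an inner product. Indeed, by continuity of $R$ and the defining property $\langle v,uq\rangle = \langle v,u\rangle q$ of the quaternionic inner product,
\begin{equation*}
\sum_{i\in I}\langle S^{-1}u_n,Ru_i\rangle q_i
\;=\; \Bigl\langle S^{-1}u_n,\; R\sum_{i\in I} u_i q_i\Bigr\rangle
\;=\; \bigl\langle S^{-1}u_n,\; R\,T(\{q_i\}_{i\in I})\bigr\rangle.
\end{equation*}

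Next I would evaluate $T(\{q_i\}_{i\in I})$. Since $\{q_i\}_{i\in I} = T^*S^{-1}u$, we get $T(\{q_i\}_{i\in I}) = TT^*S^{-1}u = SS^{-1}u = u$. Plugging this in yields $\Lambda(\{q_i\}_{i\in I})_n = \langle S^{-1}u_n, Ru\rangle$, and a final application of the adjoint relation together with the self-adjointness of $S^{-1}$ gives $\langle S^{-1}u_n,Ru\rangle = \langle u_n,(S^{-1})^*Ru\rangle = \langle u_n,S^{-1}Ru\rangle$, which is precisely the $n$-th frame coefficient of $Ru$.

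The proof is essentially an assembly of results already established; I do not anticipate a genuine obstacle. The only point requiring a little care is the interchange of the sum with the inner product and with $R$ in the first displayed equation, but this is justified by the boundedness of $R$ and $T$ together with the $\ell^2$-summability of $\{q_i\}$ (equivalently, the continuity of the pre-frame operator). Everything else is bookkeeping with the identities $TT^*=S$ and $(S^{-1})^*=S^{-1}$.
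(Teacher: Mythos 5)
Your proof is correct and follows essentially the same route as the paper: both recognize the $n$-th component of $\Lambda(\{\langle u_i,S^{-1}u\rangle\}_{i\in I})$ as the inner product $\langle S^{-1}u_n, R(\sum_{i\in I}u_i\langle u_i,S^{-1}u\rangle)\rangle$, collapse the inner sum to $u$ via the reconstruction identity (your $TT^*S^{-1}u=SS^{-1}u=u$ is the paper's $\sum_{i\in I}u_i\langle u_i,S^{-1}u\rangle=u$ written in operator form), and finish by moving $S^{-1}$ across the inner product using its self-adjointness. The only difference is cosmetic: you phrase the middle step through $T$ and $T^*$ where the paper works directly with the series.
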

\begin{proof}
For $u\in \mathcal{H}$, we have: 
$$\begin{array}{rcl}
\Lambda\left( \{\langle u_i,S^{-1}u\rangle \}_{i\in I}\right)&=& \left( \displaystyle{\sum_{i\in I }\langle S^{-1}u_n,Ru_i\rangle \langle u_i,S^{-1}u\rangle}\right)_{n\in I}\\
&=&\left( \left\langle S^{-1}u_n,R(\displaystyle{\sum_{i\in I}u_i\langle u_i,S^{-1}u\rangle)}\right\rangle\right)_{n\in I}\\
&=&\left(\langle S^{-1}u_n,Ru\rangle\right)_{n\in I}\\
&=&\left(\langle u_n,S^{-1}Ru\rangle\right)_{n\in I}.
\end{array}$$
\end{proof}
\section{Frames and operators on quaternionic Hilbert spaces}
In all this section $\mathcal{H}$ is a right quaternionic Hilbert space, $\mathbb{B}(\mathcal{H})$ is the set of all right $\mathbb{H}$-linear bounded operators on $\mathcal{H}$ and $I$ is a countable set. In what follows, we characterize frames in a right quaternionic Hilbert space  by the associated operators. The frames of the form $\{Lu_i\}_{i\in I}$, where $L\in \mathbb{B}(\mathcal{H})$ and $\{u_i\}_{i\in I}$ is a frame for $\mathcal{H}$, are studied.\\

We first give a characterization of frames by the pre-frame operator.
\begin{theorem}
Let $\{u_i\}_{i\in I}\subset \mathcal{H}$ and $T$ be its pre-frame operator. Then, the following statements are equivalent:
\begin{enumerate}
\item $\{u_i\}_{i\in I}$ is a frame for $\mathcal{H}$.
\item $T$ is well defined, bounded and surjective.
\end{enumerate}
\end{theorem}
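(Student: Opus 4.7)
The plan is to prove the two implications separately. The direction $(1) \Rightarrow (2)$ is already contained in the proposition at the end of Section 2, which asserts that whenever $\{u_i\}_{i\in I}$ is a frame, the pre-frame operator $T$ is automatically well-defined, bounded, and surjective; it suffices to restate this.

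For $(2) \Rightarrow (1)$, the first step is to identify the adjoint of $T$. Using property (i) of the quaternionic inner product, one checks on finitely supported $q = \{q_i\}_{i\in I}$ that
\[
\langle Tq \mid u\rangle_{\mathcal{H}} = \Bigl\langle \sum_{i} u_i q_i \,\Big|\, u\Bigr\rangle = \sum_{i} \overline{q_i}\langle u_i \mid u\rangle = \langle q \mid \{\langle u_i \mid u\rangle\}_{i\in I}\rangle_{\ell^2(\mathbb{H})},
\]
and then extends the equality by density and continuity to all of $\ell^2(\mathbb{H})$. This gives
\[
T^{*}u = \{\langle u_i \mid u\rangle\}_{i\in I} \quad \text{for every } u \in \mathcal{H},
\]
so $\|T^{*}u\|^2 = \sum_{i\in I} |\langle u_i \mid u\rangle|^2$. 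The Bessel (upper) bound then follows from the standard operator-norm estimate $\|T^{*}u\|^2 \leq \|T^{*}\|^2\|u\|^2 = \|T\|^2\|u\|^2$, giving $B = \|T\|^2$ as an upper frame bound.

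For the lower bound, apply Proposition \ref{prop4} with $L$ replaced by $T^{*}$: since $(T^{*})^{*} = T$ is surjective by hypothesis, $T^{*}$ is bounded below, so there exists $A > 0$ with $A\|u\|^2 \leq \|T^{*}u\|^2 = \sum_{i\in I} |\langle u_i \mid u\rangle|^2$ for every $u \in \mathcal{H}$, which is precisely the lower frame bound.

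The step I expect to be the main obstacle is the clean verification of $T^{*}u = \{\langle u_i \mid u\rangle\}_{i\in I}$ starting from the series definition of $T$, since one must confirm $\ell^2(\mathbb{H})$-summability of $\{\langle u_i \mid u\rangle\}_{i\in I}$ before writing the adjoint identity; once this is in hand, the equivalence reduces to pairing the operator-norm estimate for the upper bound with the surjectivity-to-bounded-below characterization of Proposition \ref{prop4} for the lower bound.
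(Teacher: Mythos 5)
Your proposal is correct and follows essentially the same route as the paper: the forward implication is quoted from the earlier proposition on frame operators, and the converse combines the identity $\sum_{i\in I}|\langle u_i\mid u\rangle|^2=\|T^*u\|^2$ with Proposition \ref{prop4} (surjectivity of $T$ forces $T^*$ to be bounded below), the upper bound coming from $\|T^*\|=\|T\|$. The only difference is that you spell out the verification of $T^*u=\{\langle u_i\mid u\rangle\}_{i\in I}$, which the paper takes for granted from its Section 2 remark.
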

\begin{proof}
We have already seen that $1.$ implies $2.$ Conversely, since $T$ is surjective, then, by Proposition \ref{prop4}, $T^*$ ( which is well defined and bounded since $T$ is bounded) is bounded below. The fact that $\displaystyle{\sum_{i\in I}\vert \langle u_i,u\rangle\vert^2}=\| T^*u\|^2$ completes the proof.
\end{proof}

Now, we characterize frames by the frame operator.
\begin{theorem}
Let $\{u_i\}_{i\in I}\subset \mathcal{H}$ and $S$ be its frame operator. Then, the following statements are equivalent:
\begin{enumerate}
\item $\{u_i\}_{i\in I}$ is a frame for $\mathcal{H}$.
\item $S$ is well defined and surjective.
\end{enumerate}
\end{theorem}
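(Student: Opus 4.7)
The direction $(1)\Rightarrow (2)$ is already contained in the preceding proposition on the operators associated to a frame: if $\{u_i\}_{i\in I}$ is a frame then $S$ is bounded, positive, and invertible, hence in particular well defined and surjective. My plan therefore focuses on the converse $(2)\Rightarrow (1)$, whose non-trivial content splits naturally into an ``upper-bound" part (extracting the Bessel property from well-definedness) and a ``lower-bound" part (exploiting surjectivity and self-adjointness).

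For the upper bound, the first step is to upgrade \emph{well defined} to \emph{bounded}. For each finite $F\subset I$, the partial sum
$$S_F u:=\sum_{i\in F}u_i\langle u_i,u\rangle$$
defines an operator in $\mathbb{B}(\mathcal{H})$ (a finite sum of rank-one operators). Well-definedness of $S$ means $(S_F u)_F$ converges to $Su$ in $\mathcal{H}$ for every $u\in\mathcal{H}$, so $\sup_F\|S_F u\|<\infty$ pointwise. Applying Theorem 3 (uniform boundedness) then yields $\sup_F\|S_F\|<\infty$, which gives $S\in\mathbb{B}(\mathcal{H})$. Using
$$\langle S_F u,u\rangle=\sum_{i\in F}|\langle u_i,u\rangle|^2$$
and passing to the limit on $F$ produces $\sum_{i\in I}|\langle u_i,u\rangle|^2=\langle Su,u\rangle\le\|S\|\,\|u\|^2$, so $\{u_i\}_{i\in I}$ is Bessel with constant $B=\|S\|$. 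Consequently the pre-frame operator $T$ is bounded and $S=TT^*$ is self-adjoint and positive.

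For the lower bound, surjectivity combined with Proposition \ref{prop1} gives
$$\ker S=\ker S^*=R(S)^\perp=\{0\},$$
so $S$ is bijective and Theorem \ref{thm6} (open mapping) yields $S^{-1}\in\mathbb{B}(\mathcal{H})$. Taking the positive square root $\sqrt{S}$ (which is itself bijective, since $R(\sqrt{S})\supseteq R((\sqrt{S})^2)=R(S)=\mathcal{H}$, with bounded inverse by the open mapping theorem), I obtain
$$\sum_{i\in I}|\langle u_i,u\rangle|^2=\langle Su,u\rangle=\|\sqrt{S}u\|^2\ge \frac{1}{\|(\sqrt{S})^{-1}\|^2}\,\|u\|^2=\frac{1}{\|S^{-1}\|}\,\|u\|^2,$$
where the last equality uses the identity $\|A\|^2=\|A^2\|$ for the positive self-adjoint operator $A=(\sqrt{S})^{-1}$. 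This gives the lower frame bound $A=1/\|S^{-1}\|$, completing the proof. The main obstacle is the very first step: deducing boundedness of $S$ from its pointwise well-definedness through a careful application of the uniform boundedness principle to the partial-sum approximants; everything after that is a routine combination of the structure theorems (self-adjointness of $TT^*$, square root, open mapping) already collected in the preliminaries.
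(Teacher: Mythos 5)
Your proof is correct, but the key step---passing from well-definedness of $S$ to boundedness---is handled by a different tool than in the paper. The paper proves that the graph of $S$ is closed and invokes the closed graph theorem (Theorem \ref{thm7}); notably, that closed-graph argument already uses surjectivity of $S$ (to write the limit $w'$ as $Sv$), so boundedness and surjectivity are entangled there. You instead apply the uniform boundedness principle to the finite partial-sum operators $S_F$, which yields boundedness of $S$ (and hence the Bessel property, with constant $B=\|S\|$) from pointwise convergence alone, independently of surjectivity; surjectivity then enters only for the lower bound via $\ker S=R(S)^{\perp}=\{0\}$. Your route is more modular and isolates a fact the paper's argument does not make visible: mere well-definedness of the frame operator already forces the Bessel condition. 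One small point you should make explicit: convergence of the net $(S_Fu)_F$ gives $\|S_Fu\|\leq\|Su\|+1$ only for $F$ containing some fixed finite $F_0$; for an arbitrary finite $F$ you must add the (finite) contribution of the indices in $F_0\setminus F$ before you can claim $\sup_F\|S_Fu\|<\infty$ and invoke uniform boundedness. After that, the two proofs conclude identically: positivity plus surjectivity gives invertibility, and the frame bounds come from $\sum_{i\in I}|\langle u_i,u\rangle|^2=\|S^{1/2}u\|^2$ with $A=1/\|S^{-1}\|$ and $B=\|S\|$.
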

\begin{proof}
It is well known that $1.$ implies $2.$. Assume that: 
$$\begin{array}{rcl}
S:\mathcal{H}&\rightarrow& \mathcal{H}\\
u&\mapsto& \displaystyle{\sum_{i\in I}u_i\langle u_i,u\rangle},
\end{array}$$ is well defined and surjective.
Let $\{w_n\}_{n\in \mathbb{N}}\subset \mathcal{H}$ such that $w_n \to w\in \mathcal{H}$ and $Sw_n\to w'\in \mathcal{H}$. Since $S$ is surjective, then there exists $v\in \mathcal{H}$ such that $w'=Sv$. Then:
$$\begin{array}{rcl}
Sw_n\to Sv \text{ and } w_n \to w &\Longrightarrow& \langle S(w_n-v),w_n-v\rangle\to 0\; (\text{ Cauchy-Schwarz inequality})\\
&\Longrightarrow& \displaystyle{\sum_{i\in I}\vert \langle u_i,w_n-v\rangle \vert^2}\to 0 \;(\text{ by definition of } S)\\
&\Longrightarrow& \langle u_i,w_n-v\rangle \to 0\;\, (\forall i\in I)\\
&\Longrightarrow& \langle u_i,w_n\rangle \to \langle u_i,v\rangle\\
&\Longrightarrow& \langle u_i,w\rangle =\langle u_i,v\rangle \;\,(\forall i\in I)\\
&\Longrightarrow& \displaystyle{\sum_{i\in I}u_i\langle u_i,w\rangle=\sum_{i\in I}u_i\langle u_i,v\rangle}\\
&\Longrightarrow& S(w)=S(v)=w'.
\end{array}$$
Then, the graph of $S$ is closed, hence, by the closed graph theorem \ref{thm7}, $S$ is bounded. Since $S$ is positive and surjective, then it is invertible. Thus, $S^{\frac{1}{2}}$ is positive and invertible. Hence the existence of $0< A\leq B$ such that for all $u\in \mathcal{H}$, $A\|u\|^2\leq\|S^{\frac{1}{2}}u\|^2\leq B\|u\|^2$. The fact that $\|S^{\frac{1}{2}}u\|^2=\langle Su,u\rangle=\displaystyle{\sum_{i\in I}\vert \langle u_i,u\rangle\vert^2}$ completes the proof.
\end{proof}

Let $L$ be a right $\mathbb{H}$-linear bounded operator and $\{u_i\}_{i\in I}$ be a frame for $\mathcal{H}$. In what follows, we study the sequence $\{Lu_i\}_{i\in I}$.

In general, the image of a frame under a bounded operator (even if it is injective) is not a frame, as illustrated by the following example: Consider \(\{v_i\}_{i\geqslant 1}\), a Hilbert basis of \(\mathcal{H}\), and the following injective bounded linear operator:
$$\begin{array}{rcl}
L : \mathcal{H} &\rightarrow& \mathcal{H}\\

x &\mapsto& \displaystyle{\sum_{i=1}^{+\infty} v_{i+1}\langle v_i,x \rangle}.
\end{array}$$
We have $\{L(v_i)\}_{i\geqslant 1}= \{v_i\}_{i\geqslant2}$, which is not even complete.\\

The following proposition expresses the frame operator of $\{Lu_i\}_{i\in I}$ whenever it forms a frame (or only a Bessel sequence).
\begin{proposition}\label{prop8}
Let $L\in \mathbb{B}(\mathcal{H})$ and $\{u_i\}_{i\in I}$ be a frame for $\mathcal{H}$.If $\{Lu_i\}_{i\in I}$ is a frame, then its frame operator is $LSL^*$.
\end{proposition}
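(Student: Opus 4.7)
The plan is to compute the frame operator of $\{Lu_i\}_{i\in I}$ directly from its definition and reduce the expression to $LSL^*$ using the defining property of the adjoint together with the continuity and right $\mathbb{H}$-linearity of $L$. Fix $u\in\mathcal{H}$; by definition the frame operator $\widetilde{S}$ of $\{Lu_i\}_{i\in I}$ satisfies $\widetilde{S}u=\sum_{i\in I}Lu_i\,\langle Lu_i,u\rangle$, and this series converges in $\mathcal{H}$ by the frame (hence Bessel) hypothesis on $\{Lu_i\}_{i\in I}$.

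The first step is to use the adjoint identity $\langle Lu_i\mid u\rangle=\langle u_i\mid L^{*}u\rangle$ to rewrite the scalar coefficients, turning the series into $\sum_{i\in I}Lu_i\,\langle u_i,L^{*}u\rangle$. The second step is to pull $L$ outside the sum: since $L$ is right $\mathbb{H}$-linear and bounded (hence continuous), and since $\{u_i\}_{i\in I}$ is itself a frame so that $\sum_{i\in I}u_i\langle u_i,L^{*}u\rangle$ converges in $\mathcal{H}$, I can apply $L$ to the partial sums and pass to the limit to obtain $L\bigl(\sum_{i\in I}u_i\langle u_i,L^{*}u\rangle\bigr)$. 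The third step is to recognize the inner sum as $S(L^{*}u)$ by the very definition of the frame operator $S$ of $\{u_i\}_{i\in I}$, which yields $\widetilde{S}u=LSL^{*}u$.

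Since $u\in\mathcal{H}$ was arbitrary, this gives $\widetilde{S}=LSL^{*}$, completing the proof. I do not expect any serious obstacle here: the only point that requires a moment of care is the interchange of $L$ with the infinite sum, but this is immediate from continuity of $L$ and the guaranteed convergence of $\sum_{i\in I}u_i\langle u_i,L^{*}u\rangle=SL^{*}u$. Everything else is a direct substitution using the adjoint and the definition of the frame operator.
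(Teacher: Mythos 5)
Your proof is correct. It differs from the paper's in one small but real way: the paper works at the level of quadratic forms, computing $\langle S_L u, u\rangle = \sum_{i\in I}|\langle Lu_i,u\rangle|^2 = \sum_{i\in I}|\langle u_i,L^*u\rangle|^2 = \langle SL^*u, L^*u\rangle = \langle LSL^*u,u\rangle$ and then concluding $S_L = LSL^*$, a step that implicitly relies on the fact that a self-adjoint (here, positive) operator is determined by its quadratic form. You instead compute $\widetilde{S}u$ directly as a vector identity, $\widetilde{S}u = \sum_{i\in I} Lu_i\langle u_i, L^*u\rangle = L\bigl(\sum_{i\in I} u_i\langle u_i,L^*u\rangle\bigr) = LSL^*u$, which gives the operator equality outright with no appeal to polarization or self-adjointness; the price is the need to justify pulling $L$ through the infinite sum, which you correctly dispatch via continuity of $L$ and the convergence of $SL^*u$. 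Both arguments hinge on the same adjoint identity $\langle Lu_i, u\rangle = \langle u_i, L^*u\rangle$; yours is marginally more self-contained, the paper's is marginally shorter.
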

\begin{proof}
Denote by $S_L$ the frame operator for $\{Lu_i\}_{i\in I}$. For all $u\in \mathcal{H}$, we have: 
$$\begin{array}{rcl}
\langle S_Lu,u\rangle=\displaystyle{\sum_{i\in I}\vert \langle Lu_i,u\rangle\vert^2}&=&\displaystyle{\sum_{i\in I}\vert \langle u_i,L^*u\rangle\vert^2}\\
&=&\langle SL^*u,L^*u\rangle\\
&=&\langle LSL^*u,u\rangle.
\end{array}$$
Hence, $S_L=LSL^*$.
\end{proof}
The next theorem presents a  necessary and sufficient condition on $L\in \mathbb{B}(\mathcal{H})$ for it to transform one frame into another.
\begin{theorem}\label{thm15}
Let $L\in \mathbb{B}(\mathcal{H})$ and $\{u_i\}_{i\in I}$ be a frame for $\mathcal{H}$ with frame bounds $A\leq B$. Then,  $\{Lu_i\}_{i\in I}$ is a frame for $\mathcal{H}$ if and only if $L$ is surjective.
\end{theorem}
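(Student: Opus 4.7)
My plan is to handle the two directions separately, using the machinery already built up in the paper, especially Proposition \ref{prop8} and the equivalence in Proposition \ref{prop4}. The whole argument is essentially a duality argument via the adjoint $L^*$, turning statements about $L$ into Bessel/lower-bound estimates on $L^*u$.

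For the forward direction, assume $\{Lu_i\}_{i\in I}$ is a frame. By Proposition \ref{prop8}, its frame operator is $LSL^*$. Since the frame operator of any frame is invertible (in particular, surjective), $LSL^*$ is surjective, and this forces $L$ to be surjective (its range contains the range of $LSL^*$, which is all of $\mathcal{H}$).

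For the converse, suppose $L$ is surjective. The key identity is
\[
\sum_{i\in I} |\langle Lu_i, u\rangle|^2 = \sum_{i\in I} |\langle u_i, L^*u\rangle|^2 \qquad (u\in\mathcal{H}),
\]
which transfers the question to the frame inequality for $\{u_i\}_{i\in I}$ applied at the vector $L^*u$. Using the frame bounds $A\leq B$ of $\{u_i\}_{i\in I}$, this sum lies between $A\|L^*u\|^2$ and $B\|L^*u\|^2$. The upper estimate gives Bessel immediately via $\|L^*u\|\leq \|L\|\|u\|$, yielding upper bound $B\|L\|^2$. For the lower bound I invoke Proposition \ref{prop4}: since $L$ is surjective, $L^*$ (which is $(L^*)$ whose adjoint is $L$) is bounded below, so there exists $M>0$ with $\|L^*u\|\geq M\|u\|$ for all $u\in\mathcal{H}$. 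Hence the sum is bounded below by $AM^2\|u\|^2$, giving the frame lower bound.

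There is no real obstacle here; the proof reduces to assembling Proposition \ref{prop8} (for $\Rightarrow$) and Proposition \ref{prop4} together with the adjoint identity (for $\Leftarrow$). The only point requiring a small care is making explicit that ``$L$ surjective $\Leftrightarrow L^*$ bounded below'' follows from Proposition \ref{prop4} applied with the roles of $L$ and $L^*$ swapped, since $(L^*)^* = L$.
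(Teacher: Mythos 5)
Your proof is correct and follows essentially the same route as the paper: the converse is identical (the adjoint identity $\sum_i|\langle Lu_i,u\rangle|^2=\sum_i|\langle u_i,L^*u\rangle|^2$ combined with Proposition \ref{prop4} to get $L^*$ bounded below), and your forward direction via the surjectivity of the frame operator $LSL^*$ and the inclusion $R(LSL^*)\subseteq R(L)$ is just a slightly more abstract phrasing of the paper's explicit reconstruction-formula argument.
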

\begin{proof}
Assume that $\{Lu_i\}_{i\in I}$ is a frame for $\mathcal{H}$ and let $v\in \mathcal{H}$. Then, $v=\displaystyle{\sum_{i\in I} Lu_i\langle (LSL^*)^{-1}u_i,u\rangle= L\left(\sum_{i\in I}u_i\langle (LSL^*)^{-1}u_i,u\rangle\right)}$. Set $u:=\displaystyle{\sum_{i\in I}u_i\langle (LSL^*)^{-1}u_i,u\rangle \in \mathcal{H}}$, then $v=Lu$. Hence, $L$ is surjective. Conversely, 
by Proposition \ref{prop4}, $L$ is surjective if and only if $L^*$ is bounded below. Then there exists $M>0$ such that for all $u\in \mathcal{H}$,  $M\|u\|\leq \|L^*u\|$. Let $u\in \mathcal{H}$, we have:
$$
\displaystyle{\sum_{i\in I}\vert\langle Lu_i,u\rangle\vert^2}=\displaystyle{\sum_{i\in I}\vert \langle u_i,L^*u\rangle\vert^2}.
$$
Then, $$A\|L^*u\|^2\leq \displaystyle{\sum_{i\in I}\vert \langle Lu_i,u\rangle\vert^2}\leq B\|L^*u\|^2.$$ 
Thus, $$AM^2\|u\|^2\leq \displaystyle{\sum_{i\in I}\vert \langle Lu_i,u\rangle\vert^2}\leq B\|L\|^2\|u\|^2.$$
Hence, $\{Lu_i\}_{i\in I}$ is a frame for $\mathcal{H}$ with frame bounds $AM^2$ and $B\|L\|^2.$
\end{proof}
\begin{corollary}\label{cor1}
Let $\{u_i\}_{i\in I}$ be a frame for $\mathcal{H}$ with frame bounds $A\leq B$ and let $S$ be its frame operator. Then:
\begin{enumerate}
\item $\{S^{-1}u_i\}_{i\in I}$ is a frame for $\mathcal{H}$ with frame bounds $\displaystyle{\frac{1}{B}}$ and $\displaystyle{\frac{1}{A}}$. $\{S^{-1}u_i\}_{i\in I}$ is called the canonical dual frame for $\{u_i\}_{i\in I}$.
\item $\{S^{\frac{-1}{2}}u_i\}_{i\in I}$ is a Parseval frame for $\mathcal{H}$.
\end{enumerate}
\end{corollary}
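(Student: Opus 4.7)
The plan is to deduce both parts by applying Theorem~\ref{thm15} and Proposition~\ref{prop8} to the carefully chosen operators $L = S^{-1}$ (for item 1) and $L = S^{-1/2}$ (for item 2). Since the preceding proposition guarantees that $S$ is positive, self-adjoint, bounded, and invertible, its inverse $S^{-1}$ and its inverse square root $S^{-1/2}$ (the inverse of $\sqrt{S}$, which exists because $\sqrt{S}$ is normal and bounded below, hence invertible by Proposition~\ref{prop3}) are all positive, self-adjoint, bounded, and surjective elements of $\mathbb{B}(\mathcal{H})$.

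For item (1), Theorem~\ref{thm15} immediately yields that $\{S^{-1}u_i\}_{i\in I}$ is a frame, since $S^{-1}$ is surjective. Proposition~\ref{prop8} then identifies its frame operator as $S^{-1}S(S^{-1})^* = S^{-1}SS^{-1} = S^{-1}$, using self-adjointness of $S^{-1}$. Therefore, for every $u\in \mathcal{H}$,
\[
\sum_{i\in I}|\langle S^{-1}u_i,u\rangle|^2 \;=\; \langle S^{-1}u,u\rangle \;=\; \|S^{-1/2}u\|^2.
\]
The remaining task is to sandwich $\|S^{-1/2}u\|^2$ between $\tfrac{1}{B}\|u\|^2$ and $\tfrac{1}{A}\|u\|^2$. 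From the original frame inequality one gets $\|S\|\leq B$ and, via Cauchy--Schwarz on $\langle Su,u\rangle \geq A\|u\|^2$, that $S$ is bounded below by $A$, whence $\|S^{-1}\|\leq 1/A$. Writing $\|u\|^2 = \|\sqrt{S}\,S^{-1/2}u\|^2 \leq \|S\|\,\|S^{-1/2}u\|^2$ gives the lower bound $\|S^{-1/2}u\|^2 \geq \tfrac{1}{B}\|u\|^2$, while $\|S^{-1/2}u\|^2 \leq \|S^{-1}\|\,\|u\|^2 \leq \tfrac{1}{A}\|u\|^2$ gives the upper bound.

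For item (2), the same scheme with $L = S^{-1/2}$ shows via Theorem~\ref{thm15} that $\{S^{-1/2}u_i\}_{i\in I}$ is a frame, and via Proposition~\ref{prop8} that its frame operator equals $S^{-1/2}\,S\,(S^{-1/2})^* = S^{-1/2}\,S\,S^{-1/2} = I$, where the last step uses the fact (guaranteed by the square root theorem) that $S^{-1/2}$ commutes with $S$. Consequently $\sum_{i\in I}|\langle S^{-1/2}u_i,u\rangle|^2 = \langle u,u\rangle = \|u\|^2$, so the frame is Parseval.

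The only delicate point I anticipate is the norm inequality $\|S^{-1}\|\leq 1/A$: it requires recognizing that the lower frame bound on $S$ coincides with a lower bound on $\|Su\|/\|u\|$, so that $S$ is bounded below and Proposition~\ref{prop2} (applied through Proposition~\ref{prop3}) controls $\|S^{-1}\|$. Everything else is a direct application of the auxiliary results already proved in the previous section.
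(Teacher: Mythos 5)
Your proposal is correct and follows essentially the same route as the paper: both rest on Theorem~\ref{thm15} (surjectivity of $S^{-1}$, resp.\ $S^{-1/2}$, gives a frame) combined with Proposition~\ref{prop8} (the new frame operator is $S^{-1}SS^{-1}=S^{-1}$, resp.\ $S^{-1/2}SS^{-1/2}=I$). The only divergence is at the final step, where the paper invokes Theorem~\ref{thm12} to read off the optimal bounds $1/B_{opt}$, $1/A_{opt}$ and then relaxes to arbitrary bounds, whereas you obtain $\tfrac{1}{B}\|u\|^2\leq\langle S^{-1}u,u\rangle\leq\tfrac{1}{A}\|u\|^2$ by direct Cauchy--Schwarz and operator-norm estimates --- a harmless and in fact more self-contained substitution.
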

\begin{proof}
\begin{enumerate}
\item By Proposition \ref{prop8},  Theorem \ref{thm12} and Theorem \ref{thm15}, we deduce easily that $\{S^{-1}u_i\}_{i\in I}$ is a frame with the optimal frame bounds $\displaystyle{\frac{1}{B_{opt}}}$ and $\displaystyle{\frac{1}{A_{opt}}}$ where $A_{opt}$ and $B_{opt}$ are the optimal frame bounds of $\{u_i\}_{i\in I}$. Next, we can deduce the same result for arbitrary frame bounds.
\item In view of Proposition \ref{prop8}, Theorem \ref{thm12} and Theorem \ref{thm15} again, we deduce that $\{S^{\frac{-1}{2}}u_i\}_{i\in I}$ is a frame for $\mathcal{H}$ with the optimal frame bounds $1$.
\end{enumerate}
\end{proof}
The following result shows that unitary right $\mathbb{H}$-linear bounded operators transform frames to other frames with the same frame bounds.
\begin{proposition}
Let $\{u_i\}_{i\in I}$ be a frame for $\mathcal{H}$, $S$ be its frame operator and let  $U\in \mathbb{B}(\mathcal{H})$ be a unitary right $\mathbb{H}$-linear bounded operator. Then $\{Uu_i\}_{i\in I}$ is a frame for $\mathcal{H}$ with the same frame bounds.
\end{proposition}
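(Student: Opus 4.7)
The plan is to reduce the claim to two routine computations: surjectivity of $U$ gives that $\{Uu_i\}_{i\in I}$ is a frame at all, and the isometry property of $U^*$ lets the frame bounds transfer verbatim from $\{u_i\}_{i\in I}$ to $\{Uu_i\}_{i\in I}$.

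First, I would note that a unitary operator is by definition an isometric surjection, so $U$ is surjective. By Theorem \ref{thm15}, the image $\{Uu_i\}_{i\in I}$ is therefore a frame for $\mathcal{H}$; only the sharpness of the bounds remains to be verified.

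Next, I would fix an arbitrary $u \in \mathcal{H}$ and compute the frame sum directly using the adjoint relation
\[
\langle Uu_i \mid u \rangle = \langle u_i \mid U^* u \rangle, \qquad i \in I.
\]
Summing the squared moduli and applying the frame inequality for $\{u_i\}_{i\in I}$ to the vector $U^* u$ gives
\[
A\|U^* u\|^2 \;\leq\; \sum_{i \in I} |\langle Uu_i \mid u\rangle|^2 \;\leq\; B\|U^* u\|^2.
\]
Since $U$ is unitary, $U^*$ is also unitary (in particular isometric), so $\|U^* u\| = \|u\|$, and substituting yields
\[
A\|u\|^2 \;\leq\; \sum_{i \in I} |\langle Uu_i \mid u\rangle|^2 \;\leq\; B\|u\|^2,
\]
which is exactly the frame condition with the same constants $A$ and $B$.

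There is no real obstacle here; the argument hinges entirely on two facts already established in the excerpt, namely the adjoint identity for the inner product and the fact that unitary operators preserve norms. The only minor point to mention explicitly is that $U^*$ is an isometry, which follows either from $U^*U = I$ or from the equivalence ``unitary $\Longleftrightarrow$ isometric and surjective'' recorded in the definition of unitary operators. If one wishes to derive the statement at the level of frame operators rather than frame inequalities, one can alternatively invoke Proposition \ref{prop8} to obtain $S_U = U S U^*$ and then compute $\|S_U\| = \|S\|$ and $\|S_U^{-1}\| = \|S^{-1}\|$ via $U^{-1} = U^*$, but the direct inequality argument above is shorter.
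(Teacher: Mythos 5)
Your proof is correct, and it takes a genuinely different (and more elementary) route than the paper. The paper first invokes Theorem \ref{thm15} for frameness, then passes to the level of frame operators: using Proposition \ref{prop8} it identifies the frame operator of $\{Uu_i\}_{i\in I}$ as $USU^*$, and using Theorem \ref{thm12} it reduces the claim about bounds to the operator-norm identities $\|USU^*\|=\|S\|$ and $\|(USU^*)^{-1}\|=\|S^{-1}\|$, which it verifies by exploiting the isometry of $U$ and the self-adjointness of $S$ (so that equality of \emph{optimal} frame bounds is what is actually proved, with the statement for arbitrary bounds following from that). You instead work directly with the frame inequality: the adjoint identity $\langle Uu_i\mid u\rangle=\langle u_i\mid U^*u\rangle$ together with $\|U^*u\|=\|u\|$ transfers any pair of frame bounds $A\leq B$ verbatim, which even makes the preliminary appeal to Theorem \ref{thm15} redundant since the resulting inequality already establishes frameness. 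Your argument is shorter and applies to arbitrary frame bounds without a separate reduction to the optimal ones; the paper's argument has the side benefit of explicitly computing the new frame operator and confirming the behaviour of the optimal bounds under conjugation by a unitary. You correctly flag the one point that needs justification, namely that $U^*$ is isometric, so there is no gap.
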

\begin{proof}
By Theorem \ref{thm15}, $\{Uu_i\}_{i\in I}$ is a frame for $\mathcal{H}$.  It sufficies to show that $\{u_i\}_{i\in I}$ and $\{Uu_i\}_{i\in I}$ have the same optimal frame bounds. i.e. in view of Theorem \ref{thm12} and Proposition \ref{prop8}, we will show that $\|USU^*\|=\|S\|$ and $\|(USU^*)^{-1}\|=\|S^{-1}\|.$ We have: $$\begin{array}{rcl}
\|USU^*\| &=& \displaystyle{\sup_{\|u\|=1} \|USU^*(u)\|}\\
&=& \displaystyle{ \sup_{\|u\|=1} \|SU^*(u)\|}\\
&=& \|SU^*\| = \|US\|\\
&=&  \displaystyle{\sup_{\|u\|=1} \|US(u)\|}\\
&=&  \displaystyle{\sup_{\|u\|=1} \|S(u)\| = \|S\|}.
\end{array}$$
And: $$\begin{array}{rcl}
\|(USU^*)^{-1}\| = \|US^{-1}U^*\| &=&\displaystyle{ \sup_{\|u\|=1} \|US^{-1}U^*(u)\|}\\
&=&\displaystyle{\sup_{\|u\|=1} \|S^{-1}U^*(u)\|}\\
&=& \displaystyle{\|S^{-1}U^*\| = \|US^{-1}\|}\\
&=&\displaystyle{ \sup_{\|u\|=1} \|US^{-1}(u)\|}\\
&=&\displaystyle{ \sup_{\|u\|=1} \|S^{-1}(u)\| = \|S^{-1}\|}.
\end{array}$$
\end{proof}

In the following result, we study $\{Pu_i\}_{i\in I}$ where $\{u_i\}_{i\in I}$ is a frame for $\mathcal{H}$ and $P$ is an orthogonal projection of $\mathcal{H}$.
\begin{proposition}
Let \( (u_i)_{i\in I} \) be a frame of $\mathcal{H} $ with frame bounds $A$ and $B$, and let $P$ be an orthogonal projection of $ \mathcal{H} $. Then $\{Pu_i\}_{i \in I}$ is a frame of $P(\mathcal{H})$ with bounds $A$ and $B$.
\end{proposition}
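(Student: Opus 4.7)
The plan is to exploit the two defining features of an orthogonal projection: $P$ is self-adjoint ($P^{*}=P$) and idempotent ($P^{2}=P$), so that $Pu=u$ for every $u\in P(\mathcal{H})$. These will let me reduce the frame inequality for $\{Pu_i\}_{i\in I}$ on $P(\mathcal{H})$ to the frame inequality for $\{u_i\}_{i\in I}$ on $\mathcal{H}$, without changing either bound.

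First I would observe that $P(\mathcal{H})$ is a closed right $\mathbb{H}$-linear subspace of $\mathcal{H}$ (it is the kernel of the bounded operator $I-P$), hence a right quaternionic Hilbert space in its own right with the inherited inner product, and that each $Pu_i$ indeed lies in $P(\mathcal{H})$. This makes the statement well-posed.

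The main computation is then immediate: for any $u\in P(\mathcal{H})$, using $P^{*}=P$ in the adjoint identity and $Pu=u$,
\[
\sum_{i\in I}\vert\langle Pu_i,u\rangle\vert^{2}
=\sum_{i\in I}\vert\langle u_i,P^{*}u\rangle\vert^{2}
=\sum_{i\in I}\vert\langle u_i,Pu\rangle\vert^{2}
=\sum_{i\in I}\vert\langle u_i,u\rangle\vert^{2}.
\]
Applying the frame inequality for $\{u_i\}_{i\in I}$ to the vector $u\in\mathcal{H}$ and noting that $\|u\|$ is the same whether $u$ is viewed in $\mathcal{H}$ or in $P(\mathcal{H})$ yields
\[
A\|u\|^{2}\leq \sum_{i\in I}\vert\langle Pu_i,u\rangle\vert^{2}\leq B\|u\|^{2}
\qquad\text{for every }u\in P(\mathcal{H}),
\]
which is exactly the frame condition for $\{Pu_i\}_{i\in I}$ on $P(\mathcal{H})$ with bounds $A$ and $B$.

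There is no real obstacle here; the only thing one must be careful about is that the frame inequality for $\{Pu_i\}_{i\in I}$ is required on the subspace $P(\mathcal{H})$ (not on all of $\mathcal{H}$, where the lower bound would generally fail because $\{Pu_i\}_{i\in I}$ lives in $P(\mathcal{H})$ and annihilates $P(\mathcal{H})^{\perp}$). Restricting $u$ to $P(\mathcal{H})$ is what makes the identity $Pu=u$ available and lets both constants $A$ and $B$ transfer unchanged.
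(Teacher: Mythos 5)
Your proof is correct and follows exactly the paper's argument: for $u\in P(\mathcal{H})$ one has $Pu=u$ and $\langle Pu_i,u\rangle=\langle u_i,Pu\rangle=\langle u_i,u\rangle$, so the frame inequality transfers with the same bounds. Your version simply spells out the well-posedness details (closedness of $P(\mathcal{H})$, membership of $Pu_i$) that the paper leaves implicit.
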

\begin{proof}
Let $u\in P(\mathcal{H})$, then $Pu=u$ and $\langle Pu_i,u\rangle =\langle u_i,Pu\rangle=\langle u_i,u\rangle$.
\end{proof}
\begin{corollary}
An orthogonal projection transforms a Parseval frame into a Parseval frame.\\
\end{corollary}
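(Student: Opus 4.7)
The plan is to derive this corollary as an immediate specialization of the preceding proposition, where $\{Pu_i\}_{i\in I}$ was shown to be a frame for $P(\mathcal{H})$ with the same bounds $A$ and $B$ as the original frame $\{u_i\}_{i\in I}$. Since a Parseval frame is by definition a frame with bounds $A = B = 1$, I would simply specialize that statement to this case.

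More concretely, I would start by letting $\{u_i\}_{i\in I}$ be a Parseval frame for $\mathcal{H}$, so that for every $u \in \mathcal{H}$ we have $\sum_{i \in I}|\langle u_i,u\rangle|^2 = \|u\|^2$. Then I would take $P$ to be an orthogonal projection on $\mathcal{H}$ and invoke the preceding proposition to conclude that $\{Pu_i\}_{i\in I}$ is a frame for $P(\mathcal{H})$ with bounds $A = 1$ and $B = 1$. By Definition~8, this is precisely what it means to be a Parseval frame for $P(\mathcal{H})$.

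There is no real obstacle here; the corollary is a one-line consequence of the preceding proposition once one observes that the bounds $A$ and $B$ are preserved. If one wanted to make the argument fully self-contained, one could also reproduce the short computation $\langle Pu_i,u\rangle = \langle u_i, Pu\rangle = \langle u_i,u\rangle$ valid for $u \in P(\mathcal{H})$, and then sum $|\langle u_i,u\rangle|^2 = \|u\|^2$ using the Parseval property of $\{u_i\}_{i\in I}$, but this merely repeats the proof of the previous proposition.
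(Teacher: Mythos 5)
Your proposal is correct and matches the paper's intent exactly: the corollary is stated there without proof precisely because it is the specialization $A=B=1$ of the preceding proposition on orthogonal projections, which is the argument you give. Nothing further is needed.
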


The following theorem provides conditions under which one frame is the image of another under a right $\mathbb{H}$-linear bounded operator.
\begin{theorem}\label{thm16}
Let $(u_i)_{i\in I}$ and $(v_i)_{i \in I}$ be two frames of $\mathcal{H}$ with pre-frame operators $T_1$ and $T_2$, respectively.  
We define the relation $L$ by:
\[
L\left(\sum_{i=1}^{\infty} u_iq_i\right) = \sum_{i=1}^{\infty}  v_iq_i
\]
for any sequence $(q_i)_{i \in I}\subset \mathbb{H}$ that is zero except possibly for a finite number of elements.

Then, the following statements are equivalent:
\begin{enumerate}
\item $L$ is well defined as a right $\mathbb{H}$-linear bounded opeartor on $\mathcal{H}$.
\item $\operatorname{Ker}(T_1) \subset \operatorname{Ker}(T_2)$.
\end{enumerate}
\end{theorem}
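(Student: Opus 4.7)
The plan is to exploit the fact that $\langle\{u_i\}\rangle$, the set of finite right $\mathbb{H}$-linear combinations of the $u_i$'s, is dense in $\mathcal{H}$ (because $\{u_i\}_{i\in I}$ is a frame, hence complete), together with the continuity of $T_1,T_2$ and the pseudo-inverse $T_1^{\dagger}$ introduced earlier. The natural candidate for the bounded extension will be $\widetilde{L}:=T_2T_1^{\dagger}\in\mathbb{B}(\mathcal{H})$, since the earlier section showed that $T_1^{\dagger}$ is a well-defined bounded operator when $T_1$ has closed range (which it does, being surjective onto $\mathcal{H}$).

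For the implication $(1)\Rightarrow(2)$, I assume $L$ is well-defined on $\mathcal{H}$ and bounded. Let $q=(q_i)_{i\in I}\in\operatorname{Ker}(T_1)$ and let $q^{(n)}$ denote its truncation to the first $n$ indices. Then $T_1q^{(n)}\in\langle\{u_i\}\rangle$, so by definition $L(T_1q^{(n)})=T_2q^{(n)}$. By continuity of $T_1$, $T_1q^{(n)}\to T_1q=0$, hence by continuity of $L$, $L(T_1q^{(n)})\to 0$; by continuity of $T_2$, $T_2q^{(n)}\to T_2q$. Comparing limits gives $T_2q=0$, so $q\in\operatorname{Ker}(T_2)$.

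For $(2)\Rightarrow(1)$, I take a finite sum $u=\sum_{i}u_iq_i=T_1q$ with $q=(q_i)$ finitely supported, and decompose $q=q_N+q_{\perp}$ with $q_N\in\operatorname{Ker}(T_1)$ and $q_{\perp}\in\operatorname{Ker}(T_1)^{\perp}$. Since $T_1$ is surjective, $P_{R(T_1)}=I$, and the definition of the pseudo-inverse yields $T_1^{\dagger}u=(T_1|_{\operatorname{Ker}(T_1)^{\perp}})^{-1}T_1q_{\perp}=q_{\perp}$. Then
\[
\widetilde{L}u=T_2T_1^{\dagger}u=T_2q_{\perp}=T_2q-T_2q_N=T_2q=\sum_{i}v_iq_i,
\]
where I used the hypothesis $\operatorname{Ker}(T_1)\subset\operatorname{Ker}(T_2)$ to kill $T_2q_N$. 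This simultaneously proves well-definedness (the right-hand side $\sum v_iq_i$ depends only on $u$, not on the chosen representation) and shows that $L$ coincides with the bounded operator $\widetilde{L}$ on the dense subspace $\langle\{u_i\}\rangle$, so it extends uniquely to $\widetilde{L}\in\mathbb{B}(\mathcal{H})$.

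The main obstacle is the boundedness in $(2)\Rightarrow(1)$: well-definedness alone follows easily from the kernel inclusion, but establishing a uniform estimate $\|\sum v_iq_i\|\leq C\|\sum u_iq_i\|$ directly would be awkward. The trick of identifying $L$ with $T_2T_1^{\dagger}$ sidesteps this completely, since $T_2T_1^{\dagger}$ is manifestly bounded as a composition of bounded operators, and the only verification needed is the algebraic identity on finite combinations via the orthogonal decomposition with respect to $\operatorname{Ker}(T_1)$.
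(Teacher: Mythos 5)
Your proof is correct, and in the converse direction it takes a genuinely different (and cleaner) route than the paper. The forward direction is essentially the paper's argument: the paper pushes $L$ through the infinite sum $\sum_{i\in I}u_i\langle e_i,q\rangle$ for $q\in\operatorname{Ker}(T_1)$, which is exactly your truncation-plus-continuity step made explicit, so no real difference there. For $(2)\Rightarrow(1)$, the paper proceeds in two stages: it first checks well-definedness directly from the kernel inclusion, and then proves boundedness by an explicit norm estimate, writing $q_1=P_{\operatorname{Ker}(T_1)^\perp}q$ and $q_2=P_{\operatorname{Ker}(T_2)^\perp}q$ and chaining
\[
\Bigl\|\sum_{i\in I}v_iq_i\Bigr\|=\|T_{2|\operatorname{Ker}(T_2)^\perp}(q_2)\|\le\beta_2\|q_2\|\le\beta_2\|q_1\|\le\frac{\beta_2}{\alpha_1}\Bigl\|\sum_{i\in I}u_iq_i\Bigr\|,
\]
where $\alpha_1,\beta_2$ come from the invertibility of $T_i$ restricted to $\operatorname{Ker}(T_i)^\perp$ via the open mapping theorem. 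You instead exhibit the extension in closed form as $\widetilde L=T_2T_1^\dagger$ and verify the single algebraic identity $T_2T_1^\dagger T_1q=T_2q_\perp=T_2q$ on finitely supported $q$, which delivers well-definedness and boundedness simultaneously. The two arguments rest on the same ingredients — $(T_{1|\operatorname{Ker}(T_1)^\perp})^{-1}$ is precisely $T_1^\dagger$ since $R(T_1)=\mathcal{H}$, and your bound $\|T_2\|\,\|T_1^\dagger\|$ is essentially the paper's $\beta_2/\alpha_1$ — but yours is more economical and has the bonus of identifying the operator $L$ explicitly as $T_2T_1^\dagger$, while the paper's version keeps the quantitative constant visible without invoking the pseudo-inverse machinery. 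The only cosmetic blemish is the line $T_2q_{\perp}=T_2q-T_2q_N=T_2q$, where the final equality silently uses $T_2q_N=0$; state that explicitly.
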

\begin{proof} Denote by $\{e_i\}_{i\in I}$ the standard Hilbert basis of $\ell^2(\mathbb{H})$.
\begin{enumerate}
\item[] $1.\Longrightarrow 2.)$ Assume that $L$ is well a right  $\mathbb{H}$-linear bounded operator on $\text{span}\{u_i\}_{i\in I}$, then it can be uniquely extended to a right $\mathbb{H}$-linear bounded operator on $\mathcal{H}$, denoted also by $L$, since $\text{span}\{u_i\}_{i\in I}$ is dense in $\mathcal{H}$. Let $q:=\{q_i\}_{i\in I}\subset \ell^2(\mathbb{H})$, we have:
$$\begin{array}{rcl}
T_1(q)=0 &\Longrightarrow&  \displaystyle{\sum_{i\in I} u_i\langle e_i,q\rangle }=0\\
&\Longrightarrow& L\left(\displaystyle{\sum_{i\in I} u_i\langle e_i,q\rangle }\right)=0\\
&\Longrightarrow&\displaystyle{\sum_{i\in I}Lu_i\langle e_i,q\rangle }=0\\
&\Longrightarrow& \displaystyle{\sum_{i\in I} v_i\langle e_i,q\rangle }\\
&\Longrightarrow& T_2(q)=0.
\end{array}$$
Hence, $\operatorname{Ker}(T_1) \subset \operatorname{Ker}(T_2)$.\\
Conversely, assume that $\operatorname{Ker}(T_1) \subset \operatorname{Ker}(T_2)$. Let's show, first, that $L$ is well defined. Let $p:=\{p_i\}_{i\in I},\; q:=\{q_i\}_{i\in I}\subset \mathbb{H}$ be  zeros except possibly for a finite number of elements. , we have: 
$$\begin{array}{rcl}
\displaystyle{\sum_{i\in I}u_ip_i}=\displaystyle{\sum_{i\in I}u_iq_i}&\Longrightarrow& T_1(p)=T_1(q)\\
&\Longrightarrow& T_1(p-q)=0\\
&\Longrightarrow& T_2(p-q)=0\\
&\Longrightarrow& T_2(p)=T_2(q)\\
&\Longrightarrow& \displaystyle{\sum_{i\in I}v_ip_i}=\displaystyle{\sum_{i\in I}v_iq_i}
\end{array}$$
Hence, $L$ is well defined on $\text{span}\{u_i\}_{i\in I}$ to $\text{span}\{u_i\}_{i\in I}$. It is clear that $L$ is right $\mathbb{H}$-linear. Let's show, now, that $L$ is bounded. Let $T=T_1$ or $T_2$, it is clear that $T_{|kert(T)^\perp}:Ker(T)^\perp\rightarrow H$ is invertible since $T:\ell^2(\mathbb{H})\rightarrow H$ is sujective. Then, by the open map theorem \ref{thm6}, $(T_{|kert(T)^\perp})^{-1}$ is also bounded, and then for all $q\in ker(T)^\perp$, we have: 
$$\alpha \|q\|\leq \|T_{|ker(T)^\perp}q\|\leq \beta\| q\|,$$
where $\alpha=\displaystyle{\frac{1}{\|(T_{|ker(T)^\perp})^{-1}\|}}$ and $\beta=\| T_{|ker(T)^\perp}\|$. Let, now, $q=\{q_i\}_{i\in I}\subset \mathbb{H}$ be zero except possibly for a finite number of elements and denote $q_1:=Proj_{ker(T_1)^\perp}q$ and $q_2:=Proj_{ker(T_2)^\perp}q$, where $Proj_F$ is the orthogonal projection onto $F$ for $F\subset \ell^2(\mathbb{H})$ closed. We have: 
$$\begin{array}{rcl}
\left\| L\left( \displaystyle{\sum_{i\in I}u_iq_i}\right)\right\|&=&\| \displaystyle{\sum_{i\in I}v_iq_i}\|\\
&=&\| T_{2_{|ker(T_2)^\perp}}(q_2)\|\\
&\leq&\beta_2\|q_2\|\\
&\leq& \beta_2 \|q_1\| \;\; \text{ since } ker(T_1)\subset ker(T_2)\\
&\leq& \displaystyle{\frac{\beta_2}{\alpha_1}}\|T_{1_{ker(T_1)^\perp}}(q_1)\|\\
&=&\displaystyle{\frac{\beta_2}{\alpha_1}}\| T_1(q)\|\\
&=&\displaystyle{\frac{\beta_2}{\alpha_1}}\left\| \displaystyle{\sum_{i\in I}u_iq_i}\right\|
\end{array}$$
Then $L: \text{span}\{u_i\}_{i\in I}\rightarrow \text{ span}\{v_i\}_{i\in I}$ is bounded.
\end{enumerate}
\end{proof}

\begin{definition}
Let $\{u_i\}_{i\in I}$ and $\{v_i\}_{i\in I}$ be two sequences of vectors in $\mathcal{H}$.  
We say that $\{u_i\}_{i\in I}$ and $\{v_i\}_{i\in I}$ are equivalent, and we write $\{u_i\}_{i\in I} \approx \{v_i\}_{i\in I}$, if there exists a right $\mathbb{H}$-linear bounded invertible operator $L : \mathcal{H} \to \mathcal{H}$ such that for all $i\in I$, $L(u_i) = v_i$.
\end{definition}
\begin{remark}
In view of the open map theorem \ref{thm6}, we have that: $$\{u_i\}_{i\in I} \approx \{v_i\}_{i\in I} \Longleftrightarrow \{v_i\}_{i\in I} \approx \{u_i\}_{i\in I}.$$
\end{remark}
\begin{example}
Each frame on $\mathcal{H}$ is equivalent to a Parseval frame. Indeed, we have already seen, in Corollary \ref{cor1}, that if $\{u_i\}_{i\in I}$ is a frame for $\mathcal{H}$ with the frame operator $S$, then $\{S^{\frac{-1}{2}}u_i\}_{i\in I}$ is a Parseval frame for $\mathcal{H}$.\\
\end{example}
The following theorem provides sufficient and necessary conditions under which two frames are equivalent.
\begin{theorem}
Let $(u_i)_{i\in I}$ and $(v_i)_{i \in I}$ be two frames of $\mathcal{H}$ with pre-frame operators $T_1$ and $T_2$, respectively. Then the following statements are equivalent.
\begin{enumerate}
\item $\{u_i\}_{i\in I}\approx \{v_i\}_{i\in I}$.
\item $ker(T_1)=ker(T_2)$.
\end{enumerate}
\end{theorem}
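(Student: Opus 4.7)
The plan is to derive both implications directly from the previous result, Theorem \ref{thm16}, by exploiting its symmetry in the roles of $\{u_i\}_{i\in I}$ and $\{v_i\}_{i\in I}$, together with the observation that a frame is a complete (total) family in $\mathcal{H}$.

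For the direction $1.\Longrightarrow 2.$, I assume there is an invertible $L\in\mathbb{B}(\mathcal{H})$ with $L u_i = v_i$ for every $i\in I$. Then $L^{-1}\in\mathbb{B}(\mathcal{H})$ (by the open map Theorem \ref{thm6}, though it is assumed to be bounded by definition of the equivalence relation) and $L^{-1} v_i = u_i$. Applying the implication $1.\Longrightarrow 2.$ of Theorem \ref{thm16} to the map sending $\sum u_i q_i$ to $\sum v_i q_i$ gives $\ker(T_1)\subset\ker(T_2)$, and applying it in the reverse direction (with roles of the two frames exchanged) gives $\ker(T_2)\subset\ker(T_1)$, so equality holds.

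For the direction $2.\Longrightarrow 1.$, suppose $\ker(T_1)=\ker(T_2)$. Applying the implication $2.\Longrightarrow 1.$ of Theorem \ref{thm16} in both directions produces two operators $L,M\in\mathbb{B}(\mathcal{H})$ with $Lu_i=v_i$ and $Mv_i=u_i$ for all $i\in I$. Thus $M L u_i = u_i$ and $L M v_i = v_i$ for all $i\in I$. The key remaining step is to lift these pointwise identities to the global identities $ML=I_{\mathcal{H}}$ and $LM=I_{\mathcal{H}}$. Here I use the fact that any frame is total in $\mathcal{H}$: if $u\in\mathcal{H}$ satisfied $\langle u_i\mid u\rangle=0$ for all $i$, the lower frame inequality would force $u=0$, so $\overline{\langle\{u_i\}_{i\in I}\rangle}=\mathcal{H}$, and similarly for $\{v_i\}_{i\in I}$. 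Since $ML$ and $I_{\mathcal{H}}$ are bounded right $\mathbb{H}$-linear operators that agree on the total set $\{u_i\}_{i\in I}$, hence on its dense right $\mathbb{H}$-linear span, they coincide on all of $\mathcal{H}$; the same argument applied to $LM$ and $\{v_i\}_{i\in I}$ yields $LM=I_{\mathcal{H}}$. Consequently $L$ is invertible with $L^{-1}=M$, giving $\{u_i\}_{i\in I}\approx\{v_i\}_{i\in I}$.

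The only delicate point is the final extension argument in $2.\Longrightarrow 1.$; once one notices that frames are complete and that bounded operators are determined by their values on a dense subspace, the proof reduces to a clean double application of Theorem \ref{thm16}.
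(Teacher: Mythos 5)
Your proof is correct and follows essentially the same route as the paper: both directions are obtained by applying Theorem \ref{thm16} twice, once for each ordering of the two frames. The only difference is that you explicitly justify the step $ML=LM=I_{\mathcal{H}}$ via completeness of the frames and density of their spans, a detail the paper's proof asserts without comment; this is a welcome addition rather than a deviation.
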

\begin{proof}
Assume that $\{u_i\}_{i\in I}\approx \{v_i\}_{i\in I}$, then there exists a right $\mathbb{H}$-linear, bounded and invertible operator $L$ such that $Lu_i=v_i$ for all $i\in I$. Then, by Theorem \ref{thm16}, $ker(T_1)\subset ker(T_2)$. By the open map theorem \ref{thm6}, $L^{-1}$ is also a right $\mathbb{H}$-linear bounded operator and since $v_i=L^{-1}u_i$, then by Theorem \ref{thm16}, $ker(T_2)\subset ker(T_1)$. Hence $ker(T_1)=ker(T_2)$. Conversely, assume that $ker(T_1)=ker(T_2)$, then by Theorem \ref{thm16}, $L_1:u_i\to v_i$ and $L_2:v_i\to u_i$ are two right $\mathbb{H}$-linear bounded operators. Moreover, $L_1L_2=L_2L_1=I$, where $I$ is the identity operator of $\mathbb{B}(\mathcal{H})$. Hence $\{u_i\}_{i\in I}$ and $\{v_i\}_{i\in I}$ are equivalent.
\end{proof}

We have seen  that each frame for $\mathcal{H}$ defines a self-adjoint, positive and invertible right $\mathbb{H}$-linear bounded operator which is its frame operator. The next theorem investigate the reciproque, i.e., Given a self-adjoint, positive invertible operator $L\in \mathbb{B}(\mathcal{H})$, is there a frame for $\mathcal{H}$ for which the frame operator is $L$.
\begin{theorem}
Let $L\in \mathbb{B}(\mathcal{H})$ a self-adjoint, positive and invertible opeartor on $\mathcal{H}$. Then, there exists a frame for $\mathcal{H}$ for which the frame operator is $L$.
\end{theorem}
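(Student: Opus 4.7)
The plan is to build the desired frame as the image of a Hilbert basis under $\sqrt{L}$, exploiting Proposition \ref{prop8} and Theorem \ref{thm15} that were already established.

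First I would invoke Theorem 3 to fix a Hilbert basis $\{e_i\}_{i\in I}$ of $\mathcal{H}$. By Theorem 2(b) together with the expansion $u=\sum_{i\in I}e_i\langle e_i\mid u\rangle$ that follows from the orthonormality, $\{e_i\}_{i\in I}$ is a Parseval frame whose frame operator is the identity $I$. Next, since $L\geqslant 0$, the square root theorem yields $\sqrt{L}\in\mathbb{B}(\mathcal{H})$ with $\sqrt{L}\geqslant 0$ and $\sqrt{L}\,\sqrt{L}=L$; in particular $\sqrt{L}$ is self-adjoint, i.e.\ $(\sqrt{L})^{*}=\sqrt{L}$. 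Because $L$ is invertible and $L=\sqrt{L}\,\sqrt{L}$, the operator $\sqrt{L}$ is itself invertible (hence surjective); indeed any left/right inverse of $L$ composed with $\sqrt{L}$ yields an inverse of $\sqrt{L}$. Applying Theorem \ref{thm15} to the frame $\{e_i\}_{i\in I}$ and the surjective operator $\sqrt{L}$, I conclude that
$$u_i:=\sqrt{L}\,e_i,\qquad i\in I,$$
is a frame for $\mathcal{H}$. Finally, Proposition \ref{prop8} identifies its frame operator as
$$\sqrt{L}\cdot I\cdot(\sqrt{L})^{*}=\sqrt{L}\,\sqrt{L}=L,$$
which is exactly what is required.

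The only mildly delicate points are verifying that the Hilbert basis actually has $I$ as frame operator (immediate from the Fourier-type expansion associated with Theorem 2) and that $\sqrt{L}$ is invertible and self-adjoint (both are standard consequences of the square root theorem once $L$ is positive and invertible). Beyond these book-keeping checks, the construction is a direct application of the operator-theoretic machinery developed earlier in the paper, so I do not anticipate a substantive obstacle.
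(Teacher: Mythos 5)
Your construction is exactly the one in the paper: apply $\sqrt{L}$ to a Hilbert basis, use Theorem \ref{thm15} to see the image is a frame, and use Proposition \ref{prop8} to identify the frame operator as $\sqrt{L}\,I\,\sqrt{L}=L$. The additional checks you flag (that the Hilbert basis has frame operator $I$ and that $\sqrt{L}$ is invertible and self-adjoint) are correct and merely make explicit what the paper leaves implicit.
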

\begin{proof}
Let $\{v_i\}_{i\in I}$ be a Hilbert basis for $\mathcal{H}$ and define for all $i\in I$, $u_i:=L^{\frac{1}{2}}v_i$. By Theorem \ref{thm15}, $\{u_i\}_{i\in I}$ is a frame for $\mathcal{H}$ and, by Proposition \ref{prop8}, its frame opeartor is $L^{\frac{1}{2}}IL^{\frac{1}{2}}=L$, where $I$ is the identity operator of $\mathbb{B}(\mathcal{H})$.
\end{proof}

The next theorem shows that the correspondence between Bessel sequences and transform operators is bijective.
\begin{theorem}
Let \( \mathcal{B} \) denote the set of all Bessel sequences in \( \mathcal{H} \) indexed by a
countable set \( I \). Then the map:
$$\begin{array}{rcl}
\theta : \mathcal{B} &\rightarrow& \mathbb{B}(\mathcal{H}, \ell^2(\mathbb{H}))\\
\{u_i\}_{i \in I} &\mapsto& \theta_u,
\end{array}$$
where \( \theta_u \) is the transform operator of \( \{u_i\}_{i \in I} \), is bijective. Moreover,
$$\begin{array}{rcl}
\theta^{-1} : \mathbb{B}(\mathcal{H}, \ell^2(\mathbb{H})) &\rightarrow& \mathcal{B}\\
L &\mapsto& \{L^*(v_i)\}_{i \in I},
\end{array}$$
where \( \{v_i\}_{i \in I} \) is the standard Hilbert basis for \( \ell^2(\mathbb{H}) \).
\end{theorem}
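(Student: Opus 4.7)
The plan is to verify well-definedness, injectivity, and surjectivity of $\theta$ in turn, with the explicit inverse formula falling out from the surjectivity argument. Well-definedness has essentially been handled earlier: if $\{u_i\}_{i\in I}$ is Bessel, then by definition the transform operator $\theta_u$ is a right $\mathbb{H}$-linear bounded map into $\ell^2(\mathbb{H})$, so $\theta_u\in \mathbb{B}(\mathcal{H},\ell^2(\mathbb{H}))$.

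For injectivity, I would suppose $\theta_u = \theta_{u'}$ for two Bessel sequences $\{u_i\}$ and $\{u_i'\}$ and read off, coordinate by coordinate, that $\langle u_i - u_i', u\rangle = 0$ for every $u \in \mathcal{H}$ and every $i \in I$. Specializing to $u = u_i - u_i'$ then forces $u_i = u_i'$, so the two sequences coincide.

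For surjectivity, given $L \in \mathbb{B}(\mathcal{H}, \ell^2(\mathbb{H}))$, the natural candidate preimage is $u_i := L^*(v_i)$, where $\{v_i\}_{i\in I}$ is the standard Hilbert basis of $\ell^2(\mathbb{H})$. The key identity to establish is
$$
\langle u_i, u\rangle = \langle L^*v_i, u\rangle = \langle v_i, Lu\rangle = (Lu)_i, \quad \forall u\in \mathcal{H},\; \forall i \in I,
$$
where the last equality uses that, since the coordinates of $v_i$ are the real numbers $\delta_{ij}$, the inner product on $\ell^2(\mathbb{H})$ picks off the $i$-th coordinate. This single identity does all the work: summing $|\langle u_i, u\rangle|^2$ over $i$ yields $\|Lu\|^2 \leq \|L\|^2 \|u\|^2$, so $\{u_i\}$ is Bessel, and reading off coordinates shows $\theta_u(u) = Lu$, hence $\theta_u = L$. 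Combined with injectivity, this forces $\theta^{-1}(L) = \{L^*v_i\}_{i\in I}$.

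The main delicate point I anticipate is justifying the adjoint step $\langle L^*v_i, u\rangle = \langle v_i, Lu\rangle$ in the non-commutative setting. Because quaternionic scalars do not commute, I cannot simply invoke the symmetry used in the complex case and must instead chain the defining equation of the adjoint with the Hermitian conjugation property (a) of Definition 3. Once that short computation is made, the rest of the argument is routine.
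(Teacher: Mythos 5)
Your proposal is correct and follows essentially the same route as the paper: injectivity by testing $\langle u_i - u_i', u\rangle = 0$ at $u = u_i - u_i'$, and surjectivity via the candidate $\{L^*v_i\}_{i\in I}$ together with the identity $\langle L^*v_i, u\rangle = \langle v_i, Lu\rangle = (Lu)_i$. Your explicit check that $\{L^*v_i\}_{i\in I}$ is Bessel (via $\sum_i |\langle u_i,u\rangle|^2 = \|Lu\|^2 \le \|L\|^2\|u\|^2$) and your remark on justifying the adjoint step through the Hermitian conjugation property are small but welcome additions of care over the paper's version.
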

\begin{proof}
Let $u=\{u_i\}_{i\in I}$, $w=\{w_i\}_{i\in I}$ be two Bessel sequences for $\mathcal{H}$. We have:
$$\begin{array}{rcl}
\theta_u=\theta_w &\Longrightarrow& \; \forall x\in \mathcal{H},\;\forall i \in I,\; \langle u_i,x\rangle=\langle w_i,x\rangle\\
&\Longrightarrow& \forall i\in I,\; u_i=w_i\\
&\Longrightarrow& u=w.
\end{array}$$
Then, $\theta$ is injective. Let $L\in \mathbb{B}(\mathcal{H}, \ell^2(\mathbb{H}))$, and set $f=\{L^*v_i\}_{i\in I}$. We have for all $u\in \mathcal{H}$, $\theta_f u=\{\langle L^*v_i,u\rangle \}_{i\in I}=\{\langle v_i,Lu\rangle \}_{i\in I}=Lu$ since $\{v_i\}_{i\in I}$ is the standard Hilbert basis for $\ell^2(\mathbb{H})$. Hence, $\theta$ is surjective, thus is bijective. Moreover: 
$$\begin{array}{rcl}
\theta^{-1} : \mathbb{B}(\mathcal{H}, \ell^2(\mathbb{H})) &\rightarrow& \mathcal{B}\\
L &\mapsto& \{L^*(v_i)\}_{i \in I},
\end{array}$$
where \( \{v_i\}_{i \in I} \) is the standard Hilbert basis for \( \ell^2(\mathbb{H}) \).
\end{proof}
\begin{remark}
The set $\mathbb{B}(\mathcal{H},\ell^2(\mathbb{H}))$ is a $\mathbb{R}$-vector space and the well known norm of right $\mathbb{H}$-linear bounded operators is a $\mathbb{R}$-norm on $\mathbb{B}(\mathcal{H},\ell^2(\mathbb{H}))$. We set for for all $u:=\{u_i\}_{i\in I}\in \mathcal{B}$, $\|u\|:=\|\theta_u\|$, then $\theta$ is a $\mathbb{R}$-linear, bounded invertible operator. Moreover, $\theta$ is isometric.
\end{remark}

\medskip

	\section*{Acknowledgments}
	It is my great pleasure to thank the referee for his careful reading of the paper and for several helpful suggestions.
	
	\section*{Ethics declarations}
	
	\subsection*{Availablity of data and materials}
	Not applicable.
	\subsection*{Conflict of interest}
	The author declares that hes has no competing interests.
	\subsection*{Fundings}
	Not applicable.
	
	\medskip

\end{document}